\def\R{\mathbb R}
\newcommand\J{\mathscr J}
\def\A{\mathscr{A}}
\def\be{\begin{equation}}
\def\ee{\end{equation}}
\def\bea{\begin{eqnarray}}
\def\eea{\end{eqnarray}}
\def\beas{\begin{eqnarray*}}
\def\eeas{\end{eqnarray*}}
\def\g{\partial}
\def\l{\lambda}
\def\pa{\partial }
\def\der{X_r^a\slashed\partial^\beta}
\def\sn{\slashed\partial}
\def\norm{\mathcal S^N}
\def\vortnorm{\mathcal B^N}
\def\locnorm{S^N}
\def\locvortnorm{B^N}
\newtheorem{theorem}{Theorem}[section]
\newtheorem{proposition}[theorem]{Proposition}
\newtheorem{lemma}[theorem]{Lemma}
\newtheorem{remark}[theorem]{Remark}
\title{A class of global solutions to the Euler-Poisson system}
\author{Mahir Had\v zi\'c\thanks{Department of Mathematics, King's College London, Strand, London WC2S 2LR, UK. Email: mahir.hadzic@kcl.ac.uk.} \ and Juhi Jang\thanks{Department of Mathematics, University of Southern California, Los Angeles, CA 90089, USA, and Korea Institute for Advanced Study, Seoul, Korea.  Email: juhijang@usc.edu.}}
\date{}
\begin{document}

\maketitle

\abstract{
Using recent developments in the theory of globally defined expanding compressible gases, 
we construct a class of global-in-time solutions to the compressible 3-D Euler-Poisson system without any symmetry assumptions in both the gravitational and the plasma case.
Our allowed range of adiabatic indices includes, but is not limited to all $\gamma$ of the form $\gamma=1+\frac1n$, $n\in\mathbb N\setminus\{1\}$. The constructed solutions have initially small densities and a compact support. As $t\to\infty$ the density scatters to zero and the support grows at a linear rate in $t$.
}

\tableofcontents

\section{Introduction}

\setcounter{equation}{0}

The three dimensional compressible Euler-Poisson system couples the equation for a compressible gas to a self-consistent force field created by the gas particles: if the interaction is gravitational, we refer to the model as the {\em gravitational Euler-Poisson system} and if the interaction is electrostatic we talk about the {\em electrostatic Euler-Poisson system}.
In the gravitational case we obtain a model of a Newtonian star~\cite{ZeNo,BiTr,Ch}, while in the case of repelling forces between the particles, we arrive at a 
model for plasmas~\cite{Guo1998,GuoPausader2011}. We shall work with the free-boundary formulation of the problem, wherein a moving boundary separates the support of the gas $B(t)\subset \mathbb R^3$ from the vacuum region $\mathbb R^3\setminus B(t)$. In both cases, equations take the following form:
\begin{subequations}
\label{E:EULERPOISSON}
\begin{alignat}{2}
\g_t\rho + \text{div}\, (\rho \mathbf{u})& = 0 &&\ \text{ in } \ B(t)\,;\label{E:CONTINUITYEP}\\
\rho\left(\g_t  \mathbf{u}+ ( \mathbf{u}\cdot\nabla) \mathbf{u}\right) +\nabla p &=-\rho \nabla\Phi&&\ \text{ in } \ B(t)\,;\label{E:VELOCITYEP}\\
\Delta \Phi  = 4\pi c\,\rho, \ \lim_{|x|\to\infty}\Phi(t,x) & = 0&& \ \text{ in } \ \R^3 \,; \label{E:POISSON}\\
p&=0&& \ \text{ on } \ \partial B(t) \,;\label{E:VACUUMEP} \\
\mathcal{V}_{\partial B(t)}&= \mathbf{u}\cdot \mathbf{n}(t)  && \ \text{ on } \ \partial B(t)\,;\label{E:VELOCITYBDRYEP}\\
(\rho(0,\cdot),  \mathbf{u}(0,\cdot))=(\rho_0,  \mathbf{u}_0)\,, & \ B(0)=B_0&&\,.\label{E:INITIALEP}
\end{alignat}
\end{subequations}
Here $\rho,{\bf u}, p, \Phi$ denote the gas density, velocity, pressure, and the gravitational/electrostatic potential respectively. A further unknown is the moving domain $B(t)$ with a boundary $\partial B(t)$.  The normal velocity of $\partial B(t)$ is denoted by $\mathcal{V}_{\partial B(t)}$ and the outward pointing unit normal vector to $\partial B(t)$
by $\mathbf{n}(t)$.  The constant $c$ in the Poisson equation~\eqref{E:POISSON} is either $1$ or $-1$, corresponding to the gravitational or the plasma case respectively. 
To complete the formulation of the problem, we prescribe a polytropic equation of state:
\be\label{E:EQUATIONOFSTATE}
p = \rho^{\gamma}, \ \ 1<\gamma<\infty,
\ee
and assume that the {\em physical vacuum condition} is satisfied,
\begin{align}\label{E:PHYSICALVACUUM}
\pa_n(\rho_0^{\gamma-1})\Big|_{\partial B_0} <0.
\end{align}
System~\eqref{E:EULERPOISSON} with the polytropic equation of state~\eqref{E:EQUATIONOFSTATE}  and the physical vacuum condition~\eqref{E:PHYSICALVACUUM} will be referred to as the EP$_{\gamma}$-system.

Condition~\eqref{E:PHYSICALVACUUM} is not merely a technical, but rather a crucial requirement in the problem. It is realised for a famous class of steady states of the gravitational EP$_\gamma$ system, known as the Lane-Emden stars. For the history of the physical vacuum condition, its physical significance, and its remarkable role played in a rigorous development of the well-posedness theory for vacuum free boundary fluids, we refer the reader to~\cite{LS,L2,LY1,LY2,CLS,CoSh2011,CoSh2012,JaMa2009,JM1,JM2012,JaMa2015,LXZ,GuLe, Se, Sideris2014, HaJa,HaJa2,HaJa3}.

There are very few global existence and uniqueness results for the EP$_\gamma$-system outside of special symmetry classes; in fact, solutions could blow up in a finite time \cite{GoWe,Makino92,MaPe1990, DengXiangYang} (see~\cite{GuoZadeh1998} for the plasma case in absence of free boundaries). In this work we construct open sets of initial data that lead to global-in-time solutions in both the gravitational and the plasma case, without any symmetry assumptions.

\begin{theorem}[Main result - informal statement]\label{T:MAININFORMAL}
Let $\gamma = 1+\frac 1n$, $n\in\mathbb N\setminus\{1\}$ or $\gamma\in (1,\frac{14}{13})$, and let $c\in\{1,-1\}$. Then there exists an open set of compactly supported initial data in a suitable high-order weighted Sobolev space which lead to global-in-time solutions to the EP$_\gamma$-system. The support of these solutions expands linearly-in-time and upon a suitable rescaling it is of a nearly ellipsoidal shape.
\end{theorem}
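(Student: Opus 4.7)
The plan is to construct the global-in-time solutions as nonlinear perturbations of an explicitly known expanding background, exploiting the fact that a linearly expanding gas produces an integrable-in-time Poisson force. For $\gamma = 1+\frac{1}{n}$, the background is the Sideris affine motion of the free-boundary compressible Euler system: the fluid occupies an ellipsoid $A(t)\Omega$ that expands linearly in $t$, the deformation matrix $A(t)$ solves a Hermite-type ODE with $\lvert A(t)\rvert \sim t$ and $\lvert A(t)^{-1}\rvert \sim t^{-1}$, and the density profile is a Barenblatt-type function saturating the physical vacuum condition \eqref{E:PHYSICALVACUUM}. For the broader range $\gamma \in (1,\tfrac{14}{13})$, an analogous expanding background with nearly ellipsoidal support is available from the authors' previous work, and will serve the same role.

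I would first reformulate the EP$_\gamma$-system in Lagrangian coordinates on the fixed reference domain $\Omega$, writing the flow map as $\eta(t,x) = A(t)(x + \zeta(t,x))$ where $\zeta$ is the small perturbation of the affine motion. Under this ansatz the Euler part of the equations collapses, modulo $\zeta$-quadratic terms, to the degenerate quasilinear hyperbolic system already analyzed in the pure Euler setting, with the Barenblatt weight $w(x)$ acting as the natural weight degenerating at $\partial\Omega$. The Poisson contribution appears as a source term $-(\nabla\Phi)\circ\eta$ which, crucially, inherits $t^{-2}$ decay from the expansion, because the conserved mass $M$ is concentrated in a region of diameter $\sim t$. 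I would then set up a high-order weighted energy functional $\energy(t)$ of the form used in the authors' earlier stability work, combining tangential and normal derivatives with appropriate powers of $w$, and prove energy and curl estimates giving a priori control $\energy(t) \lesssim \varepsilon^2$ uniformly in $t$ for initial data of size $\varepsilon \ll 1$.

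The new ingredient compared to the pure Euler problem is the non-local Poisson forcing, and the bulk of the argument is to show that $\Phi$ contributes only an integrable-in-time perturbation to $\energy$. The approach is to solve \eqref{E:POISSON} via the Newtonian representation $\Phi(t,x) = -c\int_{B(t)}\frac{\rho(t,y)}{|x-y|}\,dy$, pull back through $\eta$, and use the affine change of variables to factor out $A(t)$; this yields pointwise bounds $\|\nabla\Phi\|_{L^\infty} \lesssim M t^{-2}$ and matching bounds for higher derivatives weighted by $w$. Multiplying by the energy identity and integrating, the total contribution of the Poisson term is $\int_1^\infty t^{-2}\sqrt{\energy(t)}\,dt$, which is small and closable by bootstrap. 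The smallness of the initial density is used only at this step — it is what allows expansion to dominate over gravitational focusing in the case $c=1$; the plasma case $c=-1$ is strictly easier since the force aids dispersion.

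The main obstacle is controlling the Poisson term at the top order of the weighted Sobolev scale, because elliptic regularity for $\Delta\Phi = 4\pi c\,\rho\mathbf 1_{B(t)}$ is limited by the fact that $\rho$ is only Hölder continuous across the free boundary and $B(t)$ is itself part of the unknown. The fix is to never differentiate $\Phi$ more than $\rho$ allows, and instead commute derivatives through the Newton potential using the affine rescaling $y = A(t)\tilde y$, which converts spatial derivatives on $\Phi$ into $A(t)^{-1}$-weighted derivatives on a fixed, $t$-independent convolution kernel; each tangential derivative then costs $t^{-1}$ rather than a derivative on $\rho$. Once these elliptic/geometric estimates are carried out compatibly with the Lagrangian weighted norms, a standard continuity argument extends the local solutions to all $t \ge 0$, and the asymptotic ellipsoidal shape is inherited from $A(t)\Omega$ since $\zeta = O(\varepsilon)$ uniformly.
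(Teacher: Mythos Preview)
Your overall architecture matches the paper: perturb the Sideris affine motion, pass to Lagrangian coordinates on the fixed reference ball, set up high-order weighted Sobolev energies with curl estimates, show the Poisson force enters as an integrable-in-time perturbation thanks to the expansion, and close by continuity. That much is correct.

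The gap is in your proposed mechanism for the top-order Poisson estimates. Once you have pulled back to the reference domain and absorbed the affine rescaling, the potential solves $\Lambda^{ij}\partial_{ij}\tilde\Phi = 4\pi c\,\tilde\rho$ with a time-bounded kernel $G_\Lambda$; all the time decay is already sitting in the scalar prefactor $\mu^{3\gamma-4}$ in front of the force term. Further derivatives in the reference variables gain \emph{no} additional powers of $t^{-1}$, so your claim that ``each tangential derivative costs $t^{-1}$ rather than a derivative on $\rho$'' does not hold in the Lagrangian picture you set up. The difficulty at top order is purely a regularity issue: $N$ derivatives on $\int G_\Lambda(\eta(y)-\eta(z))\,q(z)\,dz$ naively produce a kernel of order $|y-z|^{-1-N}$, which is not integrable. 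The paper resolves this in two separate steps. For tangential derivatives $\slashed\partial$ it uses the Gu--Lei symmetrization $\slashed\partial_y=(\slashed\partial_y+\slashed\partial_z)-\slashed\partial_z$ followed by integration by parts in $z$, so that only the harmless combined operator $(\slashed\partial_y+\slashed\partial_z)$ ever hits the kernel (Lemma~\ref{L:TANGENTIAL}). For the normal derivative $X_r$ this trick is unavailable on the ball, and the paper instead derives an algebraic identity expressing $X_r\mathscr G^i$ in terms of $\text{div}_\Lambda\mathscr G$, $\text{Curl}\,\mathscr G$, and tangential derivatives of $\mathscr G$, then uses the Poisson equation itself to replace $\text{div}_\Lambda\mathscr G$ by $w^\alpha$ plus lower-order terms; an induction on the number of radial derivatives closes (Lemma~\ref{L:KEY}). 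This div-curl induction is the new idea in the paper and is what your proposal is missing.

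A minor correction: the restriction to $\gamma=1+\tfrac1n$ or $\gamma\in(1,\tfrac{14}{13})$ is not about needing a different background. The Sideris affine motion is used throughout the range $1<\gamma<\tfrac53$; the restriction is only the technical requirement~\eqref{E:WBOUND} that $X_r^a\slashed\partial^\beta w^\alpha$ stay in the weighted space, which holds when $\alpha=\tfrac1{\gamma-1}$ is an integer or is at least $N$.
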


A formal statement of this theorem is provided in Section~\ref{S:MAINRESULT}. 
A few remarks are in order

\begin{remark}
Although our analysis is carried out in Lagrangian coordinates we may infer an important consequence on the scattering behaviour of the (Eulerian) density function $t\mapsto \rho(t,\cdot)$; informally speaking
$
\rho(t,x) \sim_{t\to\infty} t^{-3}\rho_\infty(\frac{x}t)
$
for some function $\rho_\infty$ in an appropriate function class. In particular, Theorem~\ref{T:MAININFORMAL} gives a robust class of initial data that lead to solutions that scatter to zero at future infinity irrespectively of the sign of $c$. \\
\end{remark}

\begin{remark}
Our restrictions on $\gamma$ are largely technical except for the requirement $\gamma<\frac53$. See Remark~\ref{R:GAMMARESTRICTIONS}. \\
\end{remark}

\begin{remark}
It is very likely that the methods of this article are applicable to other Euler-matter models that satisfy two fundamental requirements: 1) they allow for a good well-posedness theory in the presence of vacuum free boundaries and 2) behave ``well" with respect to the scaling symmetries of the Euler flow, which is discussed at length below.
\end{remark}

Our basic new insight is that scaling symmetries and a suitable notion of criticality developed in our recent works~\cite{HaJa,HaJa2,HaJa3} allow one to identify an open set of initial data in the phase space which lead to global-in-time solutions. If interpreted correctly, for such a choice of data the gravitational/electrostatic interaction can be viewed as subcritical with respect to the gas pressure. We thus enter a regime dominated by the compressible Euler flow: 
\begin{subequations}
\label{E:EULER}
\begin{alignat}{2}
\g_t\rho + \text{div}\, (\rho \mathbf{u})& = 0 &&\ \text{ in } \ B(t)\,;\label{E:CONTINUITYE}\\
\rho\left(\pa_t  \mathbf{u}+ ( \mathbf{u}\cdot\nabla) \mathbf{u}\right) +\nabla p &= 0 &&\ \text{ in } \ B(t)\,;\label{E:VELOCITYE}\\
p&=0&& \ \text{ on } \ \partial B(t)\,; \label{E:VACUUME} \\
\mathcal{V} (\partial B(t))&= \mathbf{u}\cdot {\bf n}(t)  && \ \text{ on } \ \partial B(t)\,;\label{E:VELOCITYBDRYE}\\
(\rho(0,\cdot),  \mathbf{u}(0,\cdot))=(\rho_0,  \mathbf{u}_0)\,, & \ B(0)=B_0&&\,.\label{E:INITIALE}
\end{alignat}
\end{subequations}

If we add to it the equation of state~\eqref{E:EQUATIONOFSTATE} we refer to this system as the E$_\gamma$-system.
Recently Sideris~\cite{Sideris} discovered a family of special global-in-time solutions of the E$_\gamma$-system. This is a finite parameter family of so-called affine fluid motions whose support expands linearly-in-time and has the geometry of an ellipsoid. We have shown in~\cite{HaJa2} that the Sideris motions are nonlinearly stable in the range $1<\gamma\le\frac53$, while this statement was extended to the range $\gamma>\frac53$ in a recent work~\cite{ShSi2017}. In the absence of free boundaries global solutions were constructed in~\cite{Se1997,Grassin98,Ro}, however with unbounded velocities at spatial infinity. The affine motions from~\cite{Sideris} owe their existence to a certain quasi-conformal symmetry acting on~\eqref{E:EULER} and this symmetry is at the heart of this paper. We construct global solutions of the EP$_\gamma$-system as perturbations of the Sideris expanding solutions of the E$_\gamma$ system. This is a priori unlikely to succeed as the Sideris motions {\em do not} solve the EP$_\gamma$-system.
However, 
when $\gamma\in(1,\frac53)$ the gravitational/electrostatic field is effectively {\em subcritical} with respect to the pressure term and therefore the dynamics of Euler equation is expected to dominate 
over the force field term.

When $\gamma=\frac53$ the compressible Euler system enjoys a pseudo-conformal symmetry  first observed by Serre~\cite{Se1997}, analogous to the pseudo-conformal symmetry for the nonlinear Schr\"odinger equation.
It is a  natural threshold in our analysis which 
is best understood through the invariant scaling analysis - a detailed motivation is presented in Section~\ref{SS:MOTIVATION}.

In order to prove Theorem~\ref{T:MAININFORMAL}, we shall need a corresponding local well-posedness result. Such a theorem for compressible fluids satisfying the physical vacuum condition (and without any coupling to the gravitational/electrostatic field) was first proven by Coutand \& Shkoller~\cite{CoSh2012} and Jang \& Masmoudi~\cite{JaMa2015}. It is not surprising that both strategies in~\cite{CoSh2012,JaMa2015} can be adapted to prove a local well-posedness theorem for the Euler-Poisson system. This follows from the fact that the field term $\rho \nabla\Phi$ is of lower order with respect to the top order nonlinearity contained in the pressure term $\nabla p$, gratuity of the Poisson equation~\eqref{E:POISSON}. Nevertheless, some technical care is needed to produce the corresponding estimates for this nonlocal term. Since all the new  estimates needed for the proof of local well-posedness will be shown in the proof of Theorem~\ref{T:MAININFORMAL}, we state the local well-posedness theorem separately in the Appendix. 
 
For the spherically symmetric EP$_\gamma$-system local well-posedness is implicit in the work of Jang~\cite{Jang2014} and it was also shown by Luo, Xin, \& Zeng~\cite{LXZ}. Finally, when the underlying domain inherits the topology of the manifold $\mathbb T^2\times\mathbb R$ with a coupling to the force field given via the convolution kernel $\frac{1}{|\cdot|}$, Gu \& Lei~\cite{GuLe} showed local well-posedness relying on the framework developed in~\cite{CoSh2012}. We remark that the $\frac{1}{|\cdot|}$-kernel does not correspond to the Green function of the laplacian on $\mathbb T^2\times\mathbb R$. Nevertheless, ideas used in~\cite{GuLe} to control the nonlocal force term can be used to control the force field for the true Green's kernel, and we also use some of these ideas in our work. We additionally develop a new idea to obtain the high-order regularity in the radial (i.e. normal) direction near the boundary. To that end we use a Hodge-like decomposition argument and the intrinsic geometry generated by the background affine motion to relate the normal derivatives to the intrinsic divergence and the curl of force field, see  Lemma~\ref{L:KEY}.

Our local well-posedness theorem is stated on simply connected domains given as (sufficiently) smooth images of the unit ball in $\mathbb R^3$. This induces certain technical difficulties with respect to the existing literature - in particular we must use tangential derivatives close to the boundary and the  Cartesian derivatives away from the boundary. To accomplish this, we use cut-off functions and carefully compute nontrivial commutators that appear naturally.

In the absence of free boundaries and without any symmetry assumptions, various (typically small data) global  results for the plasma case Euler-Poisson system can be found 
in~\cite{GuoZadeh1998, Guo1998, GuoPausader2011, Jang2012, GMP2013, IonescuPausader2013, JangLiZhang2014, LiWu2014, GuoIonescuPausader2016}. For the 3D gravitational EP$_\gamma$-system, the only global  existence result available, to the best of our knowledge, is \cite{HaJa} that studied radially symmetric flows for $\gamma=\frac43$ in a vacuum free boundary framework. 

{\bf Plan of the paper.}
In Section~\ref{SS:MOTIVATION} we provide a detailed scaling analysis of the problem, explain the underlying affine motions, and state the main result in the Lagrangian coordinates.
Section~\ref{S:MAINTHM} is devoted to the nonlinear energy estimates and the proof of the main theorem. Finally, in Appendix~\ref{S:LWP} we explain how to prove a local well-posedness theorem for the free boundary Euler-Poisson system.

\section{Motivation and a precise statement of the main theorem}\label{SS:MOTIVATION}

A central problem in the theory of the free boundary Euler-Poisson system is the qualitative description of its solution space.
With a recent establishment of the well-posedness theory for data satisfying the physical vacuum condition, it is natural to ask whether there exist portions of the initial data space that lead to globally-in-time defined solutions. This paper is a contribution in this direction.

Informally speaking, the richness of possible dynamic scenarios associated with the EP$_\gamma$-system is due to a nonlinear feedback between the pressure term $\nabla p$
and the field term $\rho\nabla\Phi$ in the momentum equation~\eqref{E:VELOCITYEP}.
This is particularly well exemplified in the gravitational EP$_\gamma$ system, where the attractive gravitational force counteracts the tendency of the gas pressure to spread out the star. 
As a result, one can identify a well-known family of special steady state solutions known as the Lane-Emden stars wherein the two effects are exactly {\em balanced out} so that we obtain time-independent star solutions. 

A central question in this respect is the understanding of the phase space in the vicinity of Lane-Emden stars.  A wealth of literature from both mathematics and physics community has been devoted to the linearised stability questions for such steady states and as a result the following dichotomy emerges:
\begin{itemize}
\item When $1<\gamma<\frac43$, Lane-Emden stars are linearly unstable,
\item When $\frac43\le\gamma<2$, Lane-Emden stars are linearly stable.
\end{itemize}

In~\cite{J0,Jang2014} the second author rigorously showed that in the range $\gamma\in[\frac65,\frac43)$ the Lane-Emden stars are nonlinearly unstable, while the question of nonlinear stability 
in the range $\frac43<\gamma<2$ is open, despite the conditional stability results~\cite{Rein,LuSm}. In the critical case $\gamma=\frac43$ the associated radial Lane-Emden star is nonlinearly unstable despite the conditional linear stability. This has been essentially known since the work of Goldreich and Weber~\cite{GoWe} wherein a special class (parametrised by finitely many degrees of freedom) of both collapsing and  expanding solutions in the vicinity of the Lane-Emden stars was discovered. Nonlinear stability of the expanding stars against radially symmetric perturbations was shown by the authors~\cite{HaJa}. In an upcoming work~\cite{HaJa4} stability against general perturbations will be shown.


By contrast to the Lane-Emden stars, one may wonder whether there exist dynamic regimes, wherein the dynamics is effectively driven by the pressure term. In the absence of gravity, Sideris~\cite{Sideris} constructed a family of special globally defined affine motions, which can be realised as steady states of quasiconformally rescaled Euler system~\cite{HaJa2}.  It is thus natural to investigate the behaviour of the EP$_\gamma$-system under this rescaling. For any invertible $A\in\mathbb M^{3\times3}$ let the transformation
\be\label{E:TRANSFORMATION}
(\rho,{\bf u },\phi) \mapsto (\tilde\rho,\tilde{\bf u },\tilde\phi)
\ee
be defined  by
\begin{align}
\rho(t,x) & = \det A^{-1}\tilde\rho\left(\det A^{\frac{1-3\gamma}{6}}t,\, A^{-1}x\right) \label{E:ALMOSTRHO}\\
{\bf u}(t,x) & = \det A^{\frac{1-3\gamma}{6}}A\tilde{\bf u}\left(\det A^{\frac{1-3\gamma}{6}}t,\, A^{-1}x\right) \label{E:ALMOSTVELOCITY} \\
\Phi(t,x) & = \det A^{-\frac13} \tilde\Phi\left(\det A^{\frac{1-3\gamma}{6}}t,\, A^{-1}x\right).  \label{E:ALMOSTPOTENTIAL}
\end{align}

The resulting system for the new unknowns reads
\begin{subequations}
\label{E:GEULERPOISSON}
\begin{alignat}{2}
\g_s\tilde\rho + \text{div}\, (\tilde\rho \tilde{\mathbf{u}})& = 0 &&\ \text{ in } \ \tilde B(s)\,;\label{E:CONTINUITYG}\\
\tilde\rho\left(\pa_s  \tilde{\mathbf{u}}+ (\tilde{\mathbf{u}}\cdot\nabla) \tilde{\mathbf{u}}\right) +\Lambda\nabla (\tilde\rho^{\gamma}) 
+(\det A)^{\frac{3\gamma-4}{3}}\tilde\rho\Lambda\nabla \tilde\Phi&= 0 &&\ \text{ in } \ \tilde B(s)\,;\label{E:VELOCITYG} \\
\Lambda^{ij}\pa_{ij}\tilde\Phi &= 4\pi c \tilde\rho &&\ \text{ in } \ \tilde B(s)\,;\label{E:POTENTIALG}
\end{alignat}
\end{subequations}
where 
\begin{align}\label{E:NEWQUANITITIES}
\Lambda: = \det A^{\frac23}A^{-1}A^{-\top}, \ \ \tilde B(s) = A^{-1} B(s), \ \ s = \det A^{\frac{1-3\gamma}{6}}t.
\end{align}
We see that the term $(\det A)^{\frac{3\gamma-4}{3}}\tilde\rho\Lambda\nabla \tilde\phi$ 
is small if $\det A\gg1$ and $\gamma<\frac43$, which suggests that the gravitational/electrostatic field is negligible in this regime. 
To make this intuition explicit we seek
for a time-dependent path 
$
\mathbb R_+\ni t\mapsto A(t) \in \text{GL}^+(3)
$
such that the unknowns $(\tilde\rho,\tilde{\bf u},\tilde\phi)$ defined by  
\begin{align}
\rho(t,x) & = \det A(s)^{-1} \tilde{\rho}(s, y) , \label{E:DENSITY2}\\
\mathbf{u}(t,x) & = \det A(s)^{\frac{1-3\gamma}{6}}A(s)\tilde{\mathbf{u}}(s,y) , \label{E:VELOCITY2} \\
\Phi(t,x) & =  \det A(s)^{-\frac13} \tilde\Phi(s, y)
\end{align}
solve the Euler-Poisson system EP$_\gamma$. Here the new time and space coordinates $s$ and $y$ are given by 
\be\label{E:TIMESCALE}
\frac{ds}{dt} = \frac{1}{ \det A(t)^{\frac{3\gamma-1}{6}}}, \ \ y = A(t)^{-1}x,
\ee
motivated by the transformation~\eqref{E:ALMOSTRHO}--\eqref{E:ALMOSTVELOCITY}.
Introducing the notation
\be\label{E:MUDEF}
\mu(s) : = \det A(s)^{\frac13},
\ee 
a simple application of the chain rule transforms the equations~\eqref{E:CONTINUITYE}--\eqref{E:VELOCITYE} into
\begin{align}
&\tilde\rho_s - 3\frac{\mu_s}{\mu}\tilde\rho - A^{-1}A_sy\cdot\nabla\tilde\rho + \text{div}\,\left(\tilde\rho\tilde{\bf u}\right) =0, \label{E:CONTINUITYHAT2}\\
&\pa_s\tilde{\bf u} - \frac{3\gamma-1}{2}\frac{\mu_s}{\mu}\tilde{\bf u} + A^{-1}A_s\tilde {\bf u} + ( \tilde{\mathbf{u}}\cdot\nabla) \tilde{\mathbf{u}} 
-A^{-1}A_sy\cdot\nabla\tilde{\bf u} \notag \\
&  \ \ \ \ +\frac\gamma{\gamma-1}\Lambda\nabla(\tilde\rho^{\gamma-1}) + (\det A)^{\frac{3\gamma-4}{3}}\Lambda\nabla\tilde\Phi =0, \label{E:DENSITYHAT2}\\
&\Lambda^{ij}\pa_{ij}\tilde\Phi = 4\pi c \tilde\rho.
\end{align}
where we recall that
$
\Lambda(s) = \det A(s)^{\frac23}A(s)^{-1}A(s)^{-\top}.
$
This system of equations simplifies significantly after the introduction of the ``conformal" change of variables
\be
{\bf U}(s,y)  : = \tilde{\bf u}(s,y) +\beta(s) y,
\label{E:MODIFIEDVELOCITY}
\ee
where 
\be
 \beta(s) :=- A^{-1}A_s. \label{E:MATRIXB}
\ee
We refer to ${\bf U}$ as the modified velocity. 
Then the system~\eqref{E:CONTINUITYHAT2}--\eqref{E:DENSITYHAT2} can be rewritten as
\begin{align}
&\pa_s\tilde\rho +  \text{div}\,\left(\tilde\rho{\bf U}\right)  = 0 \label{E:CONTINUITYNEW}\\
& \pa_s{\bf U} +({\bf U}\cdot\nabla){\bf U}+\left(-\frac{3\gamma-1}{2}\frac{\mu_s}{\mu}\,\text{{\bf Id}}-2\beta(s)\right){\bf U} + \frac\gamma{\gamma-1}\Lambda\nabla(\tilde\rho^{\gamma-1}) 
+ \mu^{3\gamma-4}\Lambda\nabla\tilde\Phi\notag \\
& \ \ \ = \left[\beta_s -\left(\frac{3\gamma-1}{2}\frac{\mu_s}{\mu}\,\text{{\bf Id}}+\beta\right)\beta\right] y, \label{E:VELOCITYNEW} \\
&\Lambda^{ij}\pa_{ij}\tilde\Phi = 4\pi c \tilde\rho. \label{E:POTENTIALNEW}
\end{align}
where we have used $\text{div}\left( A^{-1}A_s  y\right) =
\frac{3\mu_s}{\mu}$ 
in verifying~\eqref{E:CONTINUITYNEW}. 
Equation~\eqref{E:POTENTIALNEW} can be solved for $\tilde\Phi$:
\begin{align}\label{E:LAMBDAPOTENTIAL}
\tilde\Phi = 4\pi c \,G_\Lambda * \tilde\rho=4\pi c \int_{\mathbb R^3}G_\Lambda(\cdot-y)\tilde\rho(y)\,dy,
\end{align}
where $G_\Lambda$ denotes the Green function associated with the operator $\Lambda^{ij}\pa_i\pa_j$.

{\bf Sideris affine motions.} If one neglects the field term $\mu^{3\gamma-4}\Lambda\nabla\tilde\Phi$ in~\eqref{E:VELOCITYNEW}, it has been shown in~\cite{HaJa2} 
that the Sideris affine motions~\cite{Sideris} can be realised as steady state solutions of the resulting system~\eqref{E:CONTINUITYNEW}--\eqref{E:VELOCITYNEW}.
Any such solution corresponds to the triple $({\bf U},\tilde \rho,\beta)$ solving
\begin{align}
{\bf U} &={\bf 0} \ \ \text{ in } B,\label{E:UEQUATION}\\
\beta_s -\left(\frac{3\gamma-1}{2}\frac{\mu_s}{\mu}\,\text{{\bf Id}}+\beta\right)\beta & =-\delta\Lambda, \ \ s\ge0, \label{E:BEQUATION}\\
 \frac\gamma{\gamma-1}\nabla(\rho^{\gamma-1}) & = - \delta y \ \ \text{ in } B, \label{E:RHOEQUATION} 
\end{align}
where we set $B=B_1(0)$ to be the unit ball in $\mathbb R^3$.
Equation~\eqref{E:RHOEQUATION} gives us an explicit enthalpy profile
$ w_\delta:=\tilde \rho^{\gamma-1}$ given by
\begin{align}\label{E:BARW}
w_\delta(y) = \delta w(y), \ \ w(y): = \frac{(\gamma-1)}{2\gamma}\left(1-| y|^2\right)_+,
\end{align}
where $f_+$ denotes the positive part of $f$.
Here the parameter $\delta$ is assumed strictly positive to ensure that the physical vacuum condition~\eqref{E:PHYSICALVACUUM} holds true.
In this work $\delta>0$ will be assumed small. As our initial density profiles will be small perturbations of $w_\delta^{\frac1{\gamma-1}}=O(\delta^{\frac1{\gamma-1}})$, this will in particular imply that our density is  small at time $t=0$. 

Converting back to the $(t,x)$-coordinates, we obtain the solutions found in~\cite{Sideris}:
\begin{align}
\rho_{A}(t,x) & = \det A(t)^{-1} \left[\frac{\delta(\gamma-1)}{2\gamma}(1-|A^{-1}(t)x|^2)\right]^{\frac{1}{\gamma-1}}, \label{E:RHOA}\\
{\bf u}_{A}(t,x) &= \dot{A}(t)A^{-1}(t)x, \label{E:UA},
\end{align}
where the matrix $A$ solves the Cauchy problem for the following second order ordinary differential equation
\begin{align}
\ddot A(t) & = \delta\det A(t)^{1-\gamma}A(t)^{-\top}, \label{E:AEQUATION}\\ 
(A(0),\dot A(0)) & = (A_0,A_1) \in \text{GL}^+(3)\times \mathbb M^{3\times3}.\label{E:AEQUATIONINITIAL}
\end{align}

\subsection{Uniform-in-$\delta$ bounds for the affine motions}\label{A:ASYMPTOTIC}

In this section we describe some of the fundamental properties of the affine motions solving~\eqref{E:AEQUATION}--\eqref{E:AEQUATIONINITIAL}. Our aim is to prove various statements about the asymptotic behaviour of the solution with constants that can be chosen uniformly-in-$\delta$.
We use the notation
\[
\|M\|^2 : = \sum_{i,j=1}^3 M_{ij}^2 = \sum_{i=1}^3 \l_i^2
\]
for the Hilbert-Schmidt norm of 
and matrix $M\in\mathbb M^{3\times3}$, where $\{\l_i\}_{i=1,2,3}$ represent the eigenvalues of $M$. It is well-known (see~\cite{Sideris,HaJa2,ShSi2017}) that initial value 
problem~\eqref{E:AEQUATION}--\eqref{E:AEQUATIONINITIAL} possesses a global solution $t\mapsto A^\delta(t)$ satisfying $\det A^\delta(t) \sim_{t\to\infty}(1+t)^3$. One may further decompose the solution in the form
$A^\delta(t) =  t b^\delta  + a^\delta(t)$, where $a^\delta,b^\delta$ are $3\times3$ matrices such that $b^\delta \in  \text{{\em GL}}^+(3)$ and moreover 
$\lim_{t\to\infty}\frac{a^\delta(t)}{1+t} =\lim_{t\to\infty}\dot a^\delta(t)=0$. For a concise proof of these statements see~\cite{Sideris} or Lemma A.1 of~\cite{HaJa2}. 
However,  the solution and therefore all the constants in the aforementioned bounds depend on the small parameter $\delta$. Our goal is to provide uniform-in-$\delta$ bounds for the $t\to\infty$-asymptotic behaviour of $A^\delta(t)$.

\begin{lemma} \label{L:ADELTAUNIFORM}
Assume that $(A_0,A_1)\in  \text{{\em GL}}^+(3)\times \text{{\em GL}}^+(3)$ are given. There exist constants $\delta^\ast, C>0$ such that for any $\delta \in(0,\delta^\ast)$
the unique solution $t\mapsto A^\delta(t)$ to the Cauchy problem
\begin{align}
A^\delta_{tt} &= \delta \left(\det{A^\delta}\right)^{1-\gamma} (A^\delta)^{-\top} \label{E:ADELTAEQ}\\
A^\delta(0) & = A_0,  \ \ A^\delta_t(0) = A_1\label{E:ADELTAINITIAL}
\end{align}
can be written in the form
\begin{align}\label{E:ABC}
A^\delta(t) = a^\delta(t) + t b^\delta, 
\ \ t\ge0,
\end{align}
where $b^\delta$ is a time-independent matrix 
and moreover
\begin{align}
\|b^\delta-A_1\| &\le C\delta, \label{E:ABUNIFORM} \\
\|\ddot{a}^\delta(t)\|& \le  C\delta (1+t)^{2-3\gamma}.\label{E:CASYMP}
\end{align}
Furthermore, the following statements hold:
\begin{enumerate}
\item[{\em (a)}] 
Let $\mu_\delta(\tau):=(\det A^\delta(\tau))^{\frac13}$ and $\mu^\delta_1: = (\det b^\delta)^{\frac13}>0$. 
Then
\begin{align}
\label{E:MU1UNIFORM}
&|\mu_1^\delta - \left(\det(A_1)\right)^{\frac13}| \le C\delta \\
&\frac1 Ce^{\mu_1^\delta\tau}\le \mu^\delta(\tau)  \le C e^{\mu_1^\delta \tau}, \ \ \tau\ge0. \label{E:MUASYMP0}
\end{align}
\item [{\em (b)}]
Furthermore, 
\begin{align}\label{E:LAMBDABOUNDSDELTA}
\|\Lambda^\delta_\tau\|+ \sum_{i=1}^3|\pa_\tau d^\delta_i| \le C e^{-\mu^\delta_1\tau} , \ \  \|\Lambda^\delta_{\tau\tau}\| \le C e^{-2\mu^\delta_0\tau}
, \ \ \|\Lambda^\delta\| + \|(\Lambda^\delta)^{-1}\| \le C,  \\
\sum_{i=1}^3\left(d^\delta_i+\frac 1{d^\delta_i}\right) \le C \label{E:EIGENVALUESBOUNDDELTA}\\
\frac1 C|{\bf w}|^2 \le \langle (\Lambda^\delta)^{-1}{\bf w}, {\bf w}\rangle  \le C |{\bf w}|^2 , \ {\bf w}\in \mathbb R^3. \label{E:LAMBDAPOSDEFDELTA}
\end{align}
where  
$d^\delta_i$, $i=1,2,3$, are the eigenvalues of the matrix $\Lambda^\delta$ and $\mu_0^\delta:=\frac{3\gamma-3}{2}\mu_1^\delta$. 
\end{enumerate}
\end{lemma}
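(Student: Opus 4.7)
I view the ODE \eqref{E:ADELTAEQ} as a small perturbation of the free flow $\ddot A = 0$, whose solution is $A_0 + tA_1$. The plan is to run a single bootstrap argument that simultaneously (i) extends the local solution globally, and (ii) produces all the uniform bounds. Once the decomposition \eqref{E:ABC} and the two pointwise estimates \eqref{E:ABUNIFORM}--\eqref{E:CASYMP} are in hand, the remaining statements about $\mu^\delta$ and $\Lambda^\delta$ follow by algebraic manipulation of the asymptotic form, together with the change of variables to the $\tau$-scale along which $\det A^\delta \sim e^{3\mu_1^\delta\tau}$.

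\textbf{Bootstrap and global existence.} Local smooth existence is classical ODE theory. Fix $M > 0$ (to be chosen) and set
\[
T_\delta^\ast := \sup \bigl\{ T > 0 \;:\; \|\dot A^\delta(t) - A_1\| \le M\delta \text{ for all } t \in [0,T] \bigr\}.
\]
On $[0, T_\delta^\ast)$ one then has $A^\delta(t) = A_0 + tA_1 + O(M\delta (1+t))$. Since $A_1 \in \text{GL}^+(3)$, for $\delta^\ast$ chosen small enough in terms of $(A_0, A_1)$ and $M$, this guarantees two uniform-in-$\delta$ pointwise bounds: a cubic lower bound $\det A^\delta(t) \ge c_\ast (1+t)^3$ and the matching inverse bound $\|(A^\delta(t))^{-1}\| \le C_\ast / (1+t)$, with $c_\ast, C_\ast$ depending only on $(A_0, A_1)$. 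Substituting these into \eqref{E:ADELTAEQ} yields
\[
\|\ddot A^\delta(t)\| \le C\,\delta\,(1+t)^{2-3\gamma},
\]
and since $\gamma > 1$ gives $2 - 3\gamma < -1$, integrating in time shows $\|\dot A^\delta(t) - A_1\| \le C\delta$ on $[0, T_\delta^\ast)$ with a constant $C$ independent of $M$ and $\delta$. Choosing $M$ strictly larger than this absolute $C$ strictly improves the bootstrap hypothesis, hence $T_\delta^\ast = +\infty$ and the solution is global with $\|\ddot A^\delta\| \le C\delta(1+t)^{2-3\gamma}$ uniformly.

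\textbf{Construction of $b^\delta$, $a^\delta$ and the bounds \eqref{E:ABUNIFORM}--\eqref{E:CASYMP}.} The uniform $L^1_t$-bound on $\ddot A^\delta$ implies that $\dot A^\delta$ is Cauchy at infinity, so
\[
b^\delta := \lim_{t\to\infty} \dot A^\delta(t) = A_1 + \delta \int_0^\infty (\det A^\delta)^{1-\gamma} (A^\delta)^{-\top}\, dt,
\]
and the integral estimate from the bootstrap gives \eqref{E:ABUNIFORM}. Setting $a^\delta(t) := A^\delta(t) - t b^\delta$, one has $\ddot a^\delta = \ddot A^\delta$, which is exactly \eqref{E:CASYMP}. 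Integrating backwards from $t = \infty$ also yields $\|\dot a^\delta(t)\| \le C\delta (1+t)^{3-3\gamma}$, and, according to whether $\gamma > 4/3$, $\gamma = 4/3$ or $\gamma < 4/3$, either a bounded, logarithmically growing, or sub-linearly growing bound on $a^\delta(t)$ itself. In every case $\|a^\delta(t)/t\| \to 0$ as $t \to \infty$, and this is all that is needed below.

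\textbf{Parts (a) and (b).} Since $\mu_1^\delta = (\det b^\delta)^{1/3}$, \eqref{E:MU1UNIFORM} follows from \eqref{E:ABUNIFORM} and smoothness of $\det$ at $A_1 \in \text{GL}^+(3)$; in particular $\mu_1^\delta$ stays uniformly positive. From $\det A^\delta(t) = t^3 \det\bigl(b^\delta + a^\delta(t)/t\bigr)$ and $\|a^\delta(t)/t\| \to 0$, one obtains $\mu^\delta(t) \sim t\,\mu_1^\delta$ with uniform constants, and transforming to the rescaled time $\tau$ in which $t \sim e^{\mu_1^\delta \tau}$ yields \eqref{E:MUASYMP0}. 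For the $\Lambda^\delta$-bounds, write
\[
\Lambda^\delta(t) = \bigl(\det(b^\delta + a^\delta/t)\bigr)^{2/3}\bigl(b^\delta + a^\delta/t\bigr)^{-1}\bigl(b^\delta + a^\delta/t\bigr)^{-\top},
\]
so $\Lambda^\delta(t) \to \Lambda_\infty^\delta := (\det b^\delta)^{2/3} (b^\delta)^{-1} (b^\delta)^{-\top}$ as $t \to \infty$, uniformly in $\delta$. Since $b^\delta$ is a uniform $O(\delta)$-perturbation of $A_1$, $\Lambda_\infty^\delta$ is a uniform perturbation of $\Lambda_\infty^0 = (\det A_1)^{2/3} A_1^{-1}A_1^{-\top}$, which is positive definite; this gives \eqref{E:EIGENVALUESBOUNDDELTA} and \eqref{E:LAMBDAPOSDEFDELTA}. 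For the derivative bounds in \eqref{E:LAMBDABOUNDSDELTA} I differentiate $\Lambda^\delta$ in $t$, plug in $\|\dot a^\delta\| \le C\delta(1+t)^{3-3\gamma}$ and $\|\ddot a^\delta\| \le C\delta(1+t)^{2-3\gamma}$, and change variables $t \leftrightarrow \tau$ using $dt/d\tau \sim \mu_1^\delta\,t$; the exponents $\mu_1^\delta$ and $2\mu_0^\delta = (3\gamma - 3)\mu_1^\delta$ emerge from $1/t \sim e^{-\mu_1^\delta \tau}$ together with the decay factors $(1+t)^{2-3\gamma}$ produced by $\ddot a^\delta$.

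\textbf{Main obstacle.} The only delicate step is the uniform cubic lower bound $\det A^\delta(t) \ge c(1+t)^3$ in the bootstrap: one must ensure that the constant $c$ does not degenerate as $\delta \to 0$, by separately handling a short initial interval $[0, T_0]$ (where $A^\delta$ has not yet entered the asymptotic regime) and the tail $[T_0, \infty)$. Once this is done with constants depending only on $(A_0, A_1)$, closing the bootstrap and transferring all estimates to the $\tau$-scale is routine.
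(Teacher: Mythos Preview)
Your argument is correct, but it takes a genuinely different route from the paper's. The paper recasts the problem as a Banach fixed-point: it introduces the space $X=\{\Gamma\in C([0,\infty);\mathbb M^{3\times3}):\sup_s\|\Gamma(s)/(1+s)\|<\infty\}$, writes $\Gamma^\delta:=A^\delta-(A_0+tA_1)$ in the integral form
\[
\Gamma^\delta(t)=\delta\int_0^t\!\!\int_0^s(1+\sigma)^{2-3\gamma}N\Bigl(\tilde L(\sigma)+\tfrac{\Gamma^\delta(\sigma)}{1+\sigma}\Bigr)\,d\sigma\,ds,\qquad N(B)=(\det B)^{1-\gamma}B^{-\top},
\]
and shows the right-hand side defines a strict contraction on the ball $B_{K\delta}\subset X$. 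This yields global existence together with the uniform estimate $\|\Gamma^\delta(t)\|\le K\delta(1+t)$ in one stroke, after which the paper invokes the Sideris decomposition and reads off \eqref{E:ABUNIFORM}--\eqref{E:CASYMP}. You instead run a continuity/bootstrap argument on the velocity deviation $\|\dot A^\delta-A_1\|\le M\delta$, close it via the integrability of $(1+t)^{2-3\gamma}$, and then construct $b^\delta$ directly as $\lim_{t\to\infty}\dot A^\delta(t)$. Your route is more elementary (no contraction-mapping machinery) and more self-contained (you do not need to cite the existence of the decomposition \eqref{E:ABC} from \cite{Sideris}); the paper's route is more compact and delivers existence, uniqueness and the $O(\delta)$ bound simultaneously. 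Both proofs rest on the same implicit hypothesis that the free path $\tilde L(t)=(A_0+tA_1)/(1+t)$ stays in a compact subset of $\text{GL}^+(3)$; you flag this as the ``main obstacle,'' while the paper's proof uses it when asserting that $N$ is bounded on the annulus $C_1\le\|B\|\le C_2$. For parts (a) and (b) the two arguments coincide: both reduce to algebra with $A^\delta(t)=t(b^\delta+a^\delta(t)/t)$ and the change to the $\tau$-scale, and both defer the detailed $\Lambda^\delta$-bounds to Lemma~A.1 of \cite{HaJa2}.
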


\begin{proof}
Let $L: = A_0 + t A_1$ be the solution of the initial value problem~\eqref{E:ADELTAEQ}--\eqref{E:ADELTAINITIAL} when $\delta=0$.
Consider the Banach space of at most linearly growing continuous functions
\[
X: = \left\{\Gamma\in C\left([0,\infty),\mathbb M^{3\times 3}\right), \ \ \sup_{0\le s <\infty}\|\frac{\Gamma(s)}{1+s}\| <\infty\right\}
\]
equipped with the norm
$
\|\Gamma\|_X := \|\frac {\Gamma(t)}{1+t}\|_\infty.
$
For a constant $K>0$ to be specified below, consider a closed ball in $X$ of radius $K\delta$ i.e. let
\[
B_{K\delta} = \left\{\Gamma\in X\,\big| \sup_{0\le s} \|\frac{\Gamma(s)}{1+s}\| \le K\delta \right\}
\]
By~\eqref{E:ADELTAEQ}, $\Gamma^\delta:=A^\delta-L$ solves the ODE
\begin{align}
\ddot \Gamma^\delta(t) = \delta (1+t)^{2-3\gamma} N(\tilde L(t) + \frac{\Gamma^\delta(t)}{1+t}),
\end{align}
where $N(A) = \det A^{1-\gamma}A^{-\top}$ is the nonlinearity and $\tilde L(t):=\frac{L(t)}{1+t}$. Equivalently, one may write
\begin{align}\label{E:INTEGRALFORM}
\Gamma^\delta (t) = \delta \int_0^t \int_0^s (1+\sigma)^{2-3\gamma} N(\tilde L(\sigma) + \frac{\Gamma^\delta(\sigma)}{1+\sigma})\,d\sigma\,ds,
\end{align}
where we observe that $\Gamma^\delta$ satisfies the homogeneous boundary conditions $\Gamma^\delta(0)=0$ and $\dot \Gamma^\delta(0)=0$.
Define the operator 
\[
F(\Gamma)(t) : =  \delta \int_0^t \int_0^s (1+\sigma)^{2-3\gamma} N(\tilde L(\sigma) + \frac{\Gamma(\sigma)}{1+\sigma})\,d\sigma\,ds.
\]
Note that the matrix $\tilde L(\sigma) = \frac{L(\sigma)}{1+\sigma}$ is uniformly bounded from below and above. For any $\Gamma\in B_{K\delta}$ with $\delta$ 
sufficiently small we infer that there exist $C_1,C_2>0$ such that $C_1\le \|\tilde L(\sigma) + \frac{\Gamma(\sigma)}{1+\sigma}\|\le C_2$. By the continuity of $N$ there exists a constant
$C_3: = \max_{C_1\le \|B\|\le C_2}\|N(B)\|<\infty $ so that
\[
|F(\Gamma)(t)|  \le C_3\delta  \int_0^t \int_0^s (1+\sigma)^{2-3\gamma} \le C_4 \delta (1+t)^{4-3\gamma}.
\]
Dividing by $(1+t)$ it follows that $\sup_{0\le s\le t} \|\frac{F(\Gamma)(s)}{1+s}\| \le C_4\delta$ for some universal constant $C_4$. We may therefore choose $K = 2C_4$ and $\delta<\delta^\ast$ sufficiently small to conclude that $F$ maps $B_{K\delta}$ into itself. We next claim that $F$ is a strict contraction. To see this we note that for any $\Gamma_1,\Gamma_2\in B_{K\delta}$ we have from~\eqref{E:INTEGRALFORM}
\begin{align}
|F(\Gamma_1)(t)-F(\Gamma_2)(t)| & \le \delta  \int_0^t \int_0^s (1+\sigma)^{2-3\gamma} 
\left\vert N\left(\tilde L(\sigma)+ \frac{\Gamma_1(\sigma)}{1+\sigma}\right) - N\left(\tilde L(\sigma)+ \frac{\Gamma_2(\sigma)}{1+\sigma}\right) \right\vert \,d\sigma\,ds \notag \\
& \le \delta \max_{C_1\le \|B\|\le C_2}\|DN(B)\| \sup_{0\le s\le t} \left\vert\frac{\Gamma_1(\sigma)-\Gamma_2(\sigma)}{1+\sigma}\right\vert \int_0^t \int_0^s (1+\sigma)^{2-3\gamma} \notag \\
& \le C \delta (1+t)^{4-3\gamma} \sup_{0\le s\le t} \left\vert\frac{\Gamma_1(\sigma)-\Gamma_2(\sigma)}{1+\sigma}\right\vert. 
\end{align}
Dividing by $(1+t)$ and choosing $\delta^\ast$ sufficiently small, we conclude that the map $F$ is indeed a strict contraction. Therefore there exists a unique $\Gamma\in B_{K\delta}$ solving~\eqref{E:INTEGRALFORM} for any $\delta\le\delta^\ast$ (and $K$ depending only on $\delta^\ast$). By~\cite{Sideris} we know that there exists a 
decomposition of the form~\eqref{E:ABC} such that $b^\delta$ is time-independent and $\lim_{t\to\infty}\frac{\dot a^\delta(t)}{1+t}=0$. It follows that $\|b^\delta - A_1\|=O(\delta)$, which proves~\eqref{E:ABUNIFORM}. 
This in particular also implies~\eqref{E:MU1UNIFORM}. Estimate~\eqref{E:CASYMP}  follows easily from
$\ddot a^\delta = \delta (1+t)^{2-3\gamma}N(\tilde L(t) + \frac{\Gamma(t)}{1+t})$ and $\Gamma \in B_{K\delta}$. Bounds~\eqref{E:MUASYMP0} follow from the decomposition~\eqref{E:ABUNIFORM}, the identity $\left(\det (A^\delta)\right)^{\frac13} = (1+t)\left(\det(\frac{b^\delta t}{1+t})+ \frac{a^\delta(t)}{1+t}\right)^{\frac13}$, the bounds on $a^\delta$ afforded by~\eqref{E:CASYMP}, and the uniform bounds on $\Gamma^\delta$. Part (b) of the lemma now follows the proof of the analogous statements in Lemma A.1 of~\cite{HaJa2}. Note that in the case when the matrix $A^\delta$ is diagonal, i.e. $A^\delta(t) = \l^\delta(t)\text{Id}_{3\times3}$, then $\Lambda^\delta = \text{Id}_{3\times3}$ and part (b) is trivial. 
\end{proof} 
 
\begin{remark}
Note that the constant $C$ in the statement of Lemma~\ref{L:ADELTAUNIFORM} is independent of $\delta$.
\end{remark} 
 
We note here that
\be\label{E:MU1DELTADEF}
\mu_1^\delta = (\det b^\delta)^{\frac13}\sim (\det A_1)^{\frac13} >0
\ee
describes the leading order rate of expansion of the affine motion. For future reference
we remind the reader that 
\be\label{E:MUDELTA0DEF}
\mu_0^\delta=\frac{3\gamma-3}{2}\mu_1^\delta,
\ee
and introduce
\be\label{c2}
\mu_2^\delta:=\frac{5-3\gamma}{2}\mu_1^\delta. 
\ee 
We remark that $\mu_0^\delta$ and $\mu_2^\delta$ are both strictly positive when $1<\gamma<\frac53$.
\subsection{Lagrangian coordinates and formulation of the stability problem}\label{SS:LAGRCOORD}

We now fix a Sideris' affine motion parametrised by the choice $(A_0,A_1,\delta)\in\text{GL}^+(3)\times \mathbb M^{3\times3}\times \mathbb R_+$.
Going back to~\eqref{E:CONTINUITYNEW}--\eqref{E:POTENTIALNEW}, our strategy is 
to construct a solution to the Euler-Poisson system~\eqref{E:EULERPOISSON} as a perturbation of the prescribed Sideris motion. 
To capitalise on the background expansion of the matrix A, we shall now rephrase the problem in Lagrangian variables, following a strategy introduced in~\cite{HaJa2}.
We define the map $\eta:B \to\tilde B(s)$ as a flow map associated with the modified velocity field ${\bf U}:$
\begin{align}
\eta_s(s, y) &= \mathbf{U}(s,\eta(s, y)), \label{E:FLOWMAPV} \\
\eta(0, y) &= \eta_0( y), \label{E:ETAINITIALV}
\end{align}
where $\eta_0:B\to\tilde B(0)$ is a sufficiently smooth diffeomorphism to be specified later and 
\[
B = B_1(0)
\]
is the unit ball in $\mathbb R^3$.
To pull-back~\eqref{E:CONTINUITYNEW}-\eqref{E:VELOCITYNEW} to the fixed domain $B$, we introduce the notation
\begin{align*}
\A : = [D\eta]^{-1} \ \ &\text{ (Inverse of the Jacobian matrix)},\\
\J  : = \det [D\eta] \ \ &\text{ (Jacobian determinant)},\\
f : = \tilde\rho\circ \eta \ \ &\text{ (Lagrangian density)}, \\ 
{\bf V} : = {\bf U}\circ \eta \ \ &\text{ (Lagrangian modified velocity)},\\
\Psi^\ast : = \tilde\Phi\circ\eta \ \ &\text{ (Lagrangian potential)}.
\end{align*}
From $\A [D\eta] = \text{{\bf Id}}$, one can obtain the differentiation formula for $\A$ and $\J$: 
\[
\pa \A^k_i = - \A^k_\ell \pa \eta^\ell,_s \A^s_i \  ; \quad \pa \J = \J \A^s_\ell\pa\eta^\ell,_s 
\]
for $\pa=\pa_s$ or $\pa=\pa_i$, $i=1,2,3$. Here we have used the Einstein summation convention and the notation $F,_k$ to denote the $k^{th}$ partial  derivative of $F$. Both expressions will be used throughout the paper.

It is well-known~\cite{CoSh2012,JaMa2015} that the continuity equation~\eqref{E:CONTINUITYNEW} reduces to the relationship
\[
f \J = f_0\J_0. 
\]
We choose $\eta_0$ such that  
$ \delta w  = (f_0 \J_0)^{\gamma-1}$ where $ w$ is given in \eqref{E:BARW}. For given initial density function $\rho_0$ so that $\rho_0/\rho_A$ is smooth, where $\rho_A$ is defined in~\eqref{E:RHOA}.
The existence of such $\eta_0$ follows from a result by Dacorogna \& Moser~\cite{DM}. As a consequence the Lagrangian density can be expressed as
\be \label{E:CONTINUITYLAGRV}
f=  \delta^\alpha w^\alpha \J^{-1}, \ \ \alpha : = \frac1{\gamma-1}.
\ee
This specific choice of $\eta_0$ (gauge fixing) is important for our analysis; $\eta_0(y) - y$ 
measures the initial particle displacement with respect to the background profile.

If we set $A = (\det A)^{\frac13} O$, $O\in\text{SL}^+(3)$, then a simple calculation shows that 
$\beta = - \frac{\mu_s}{\mu}\text{Id} - O^{-1}O_s$, where we recall the definition~\eqref{E:MUDEF} of $\mu$.
Using~\eqref{E:BEQUATION} equation~\eqref{E:VELOCITYNEW} takes the following form in Lagrangian coordinates: 
\begin{align}
\label{E:VELOCITYLAGR}
& \eta_{ss} +  \frac{5-3\gamma}2\frac{\mu_s}{\mu}\eta_s+ 2O^{-1}O_s\eta_s+\delta\Lambda \eta + \frac\gamma{\gamma-1}\delta\Lambda \A^\top \nabla(f^{\gamma-1}) 
+\mu^{3\gamma-4}\Lambda \A^\top \nabla\Psi^\ast= 0  \\
&\Lambda^{ij}\A^k_i(\A^\ell_j\Psi^\ast,_\ell),_k = 4\pi c \delta^\alpha w^\alpha\J^{-1}, c=\pm1 \label{E:POTENTIALLAGR}.
\end{align}
We normalise the Lagrangian Poisson equation~\eqref{E:POTENTIALLAGR} by introducing
\begin{align}\label{E:PSIPSISTAR}
\Psi : = \delta^{-\alpha} \Psi^\ast.
\end{align}
Multiplying~\eqref{E:VELOCITYLAGR} by $\delta^{-1}w^\alpha$ and using~\eqref{E:PSIPSISTAR} we obtain,
\begin{align}
 & \delta^{-1}w^\alpha \left(\pa_{ss}\eta_i-\frac{5-3\gamma}2\frac{\mu_s}{\mu}\pa_s\eta_i + 2(O^{-1}O_s)_{ij}\pa_s\eta_j+\Lambda_{ij}\eta_j\right) \notag \\
 & \ \ \ \ + \Lambda_{ij}( w^{1+\alpha} \A^k_j\J^{-\frac1\alpha}),_k 
 +\mu^{3\gamma-4}\delta^{\alpha-1} w^\alpha\Lambda_{ij} \A^k_j \Psi,_k= 0, \ \ i=1,2,3. \label{E:ETAPDE1}
\end{align}
Since in the $s$-time variable the matrix $\beta(s)$ blows up in finite time, we introduce a new time variable $\tau$ via 
\be\label{E:TIMESCALE2}
\frac{d\tau}{ds} = \det{A}^{\frac{3\gamma-3}{6}} = \mu^{\frac{3\gamma-3}{2}} \ \text{ or equivalently } \ \frac{d\tau}{dt} = \frac1{\mu}. 
\ee
Since $\mu$ grows linearly in $t$ by results from~\cite{Sideris} 
the new time $\tau$ grows like $\log t$ as $t\to\infty$ and therefore corresponds to 
a logarithmic time-scale with respect to the original time variable $t$.
Equation~\eqref{E:VELOCITYLAGR} takes the form
\be\label{E:VELOCITYLAGR2}
\mu^{3\gamma-4}\left(\mu\eta_{\tau\tau} +  \mu_\tau\eta_\tau+ 2\mu\Gamma^*\eta_\tau \right)+\delta\Lambda \eta + \frac\gamma{\gamma-1} \delta \Lambda\A^\top \nabla(f^{\gamma-1}) 
+\mu^{3\gamma-4}\Lambda \A^\top \nabla\Psi^\ast= 0,
\ee
where
$
\Gamma^* =  O^{-1}O_\tau.
$
In coordinates,
\begin{align}
 & \delta^{-1}w^\alpha\mu^{3\gamma-4}\left(\mu\pa_{\tau\tau}\eta_i+\mu_\tau\pa_\tau\eta_i+2\mu\Gamma^*_{ij}\pa_\tau\eta_j\right)+  w^\alpha\Lambda_{i\ell}\eta_\ell \notag \\
 & \ \ \ \  +  ( w^{1+\alpha} 
\Lambda_{ij}\A^k_j\J^{-\frac1\alpha}),_k +\delta^{\alpha-1}\mu^{3\gamma-4}w^\alpha\Lambda_{ij} \A^k_j \Psi,_k= 0, \ \ i=1,2,3.\label{E:ETAPDE2}
\end{align} 
Defining the perturbation 
\begin{align} 
\uptheta( \tau, y):= \eta( \tau,y) -  y
\end{align}
equations \eqref{E:POTENTIALLAGR} and~\eqref{E:ETAPDE2} take the form:
\begin{align}
& \delta^{-1}w^\alpha\mu^{3\gamma-3} \left(\pa_{\tau\tau}\uptheta_i+\frac{\mu_\tau}{\mu}\pa_\tau\uptheta_i+2\Gamma^*_{ij}\pa_\tau\uptheta_j\right) +w^\alpha \Lambda_{i\ell}\uptheta_\ell  \notag \\
& \ \ \ \ +  \left( w^{1+\alpha} \Lambda_{ij}\left(\A^k_j\J^{-\frac1\alpha}-\delta^k_j\right)\right),_k  
+\delta^{\alpha-1}\mu^{3\gamma-4}w^\alpha\Lambda_{ij} \A^k_j \Psi,_k= 0, \ \ i=1,2,3, \label{E:THETAEQUATION} \\
& \Lambda^{ij}\A^k_i(\A^\ell_j\Psi,_\ell),_k = 4\pi c w^\alpha\J^{-1}, \ \ c=\pm1, \label{E:POTENTIALEQUATION}
\end{align}
equipped with the initial conditions
\begin{align}
& \uptheta(0,  y) = \uptheta_0(  y), \ \ \uptheta_\tau(0,  y) ={\bf V}(0, y) = {\bf V}_0(  y), \ \  y\in B=B_1({\bf 0}).  \label{E:THETAINITIAL}
\end{align}
Problem~\eqref{E:THETAEQUATION}--\eqref{E:THETAINITIAL} is the Lagrangian formulation of the stability problem around a given expanding motion $(\rho_A,{\bf u}_A,A)_{A_0,A_1,\delta}$.

\subsection{Notation}\label{SS:NOTATION}

\noindent{\bf Lie derivative of the flow map.}
For vector-fields ${\bf F}:\Omega\to\mathbb R^3$, we introduce the Lie derivatives: full gradient along the flow map $\eta$
\begin{align}
[\nabla_\eta {\bf F}]^i_j := \A^s_j {\bf F}^i,_s, \ \ i,j=1,2,3, 
\end{align}
the divergence
\begin{align}
\text{div}_\eta{\bf F} := \A^s_\ell {\bf F}^\ell,_s
\end{align}
 the anti-symmetric curl matrix 
\begin{align}\label{E:BIGCURLDEF}
\left[\text{Curl}_{\eta}{\bf F}\right]^i_j :=\A^s_j{\bf F}^i,_s- \A^s_i{\bf F}^j,_s, \ \ i,j=1,2,3,
\end{align}
and the anti-symmetric $\Lambda$-curl matrix 
\begin{align}\label{E:BIGCURLDEF}
\left[\text{Curl}_{\Lambda\A}{\bf F}\right]^i_j :=\Lambda_{jm}\A^s_m{\bf F}^i,_s- \Lambda_{im}\A^s_m{\bf F}^j,_s, i,j=1,2,3.
\end{align}
We will also use $D_{\,}{\bf F}$, $\text{div}_{\,}{\bf F}$, $\text{curl}_{\,}{\bf F}$ to denote its full gradient, its divergence, and its curl:
\[
[D_{\,}{\bf F}]^i_j = {\bf F}^i,_j; \ \ \ \text{{div}}_{\,}{\bf F} ={\bf F}^\ell,_\ell; \ \  \ \left[\text{{Curl}} _{\,}{\bf F}\right]^i_j =  F^i,_j - F^j,_i, \ \ i,j=1,2,3.
\]

\noindent{\bf Function spaces.} For any measurable function $f$, any $k\in\mathbb N$, and any non-negative function $g:\Omega\to\mathbb R_+$, such that $\int_B  w(y)^{k}\,f(y)^2\,g(y)\,dy<\infty$ we introduce the notation
\begin{align}\label{E:WEIGHTEDSPACES}
\|f\|_{k,g}^2: = \int_{B} w(y)^{k}\,f(y)^2\,g(y)\,dy.
\end{align}

\noindent{\bf Vector fields near the boundary.}  
Since our analysis in the vicinity of the boundary $\pa B$ will require a careful use of tangential and normal vector fields, we shall introduce additional notation. A tangential vector field is given by
\be\label{E:TANGENTIALVF}
\sn_{ji}:= y_j \pa_i - y_i \pa_j, \ \ i,j=1,2,3.
\ee
We denote the normal vector by $X_r$: 
\be\label{E:NORMALVF}
X_r:= r\pa_r.
\ee
Finally, it is simple to check the decomposition
\be\label{E:CARTESIANVSPOLAR}
\pa_i = \frac{y_j}{r^2} \sn_{ji} + \frac{y_i}{r^2}X_r, \ \ i=1,2,3.
\ee
and the commutator identities
\begin{align}\label{com_rel}
[\sn_{ji},X_r]=0, \quad [\sn_{ji},\sn_{ik}]= \sn_{jk}, \quad [\pa_m,X_r]= \pa_m, \quad [\pa_m,\sn_{ji}]= \delta_{mj}\pa_i - \delta_{mi}\pa_j.
\end{align}

\subsection{Main result} \label{S:MAINRESULT}

Our norms will require the usage of different vector fields in the vicinity of the boundary and away from it. To that end we introduce a cut-off function $\psi\in C^\infty(\bar B,\,[0,1])$, such that $\psi = 1$ on $\{\frac34\le|y|\le1\}$ and $\psi = 0$ on $\{0\le|y|\le\frac14\}$. 
We may now define the high-order weighted Sobolev norm that measures the size of the deviation $\uptheta$. For any $N\in\mathbb N$, let
\begin{align}
& \mathcal S^N(\uptheta,{\bf V})(\tau)  = \mathcal S^N(\tau)  \notag \\ 
&: =   \sum_{a+|\beta|\le N}\sup_{0\le\tau'\le\tau}
\Big\{\delta^{-1}\mu^{3\gamma-3}\left\|\der{\bf V}\right\|_{a+\alpha,\psi}^2 +\left\|\der{\uptheta}\right\|_{a+\alpha,\psi}^2  \notag \\
& \ \ \ \  \qquad \qquad \qquad \qquad+ \left\|\nabla_\eta\der \uptheta\right\|_{a+\alpha+1,\psi}^2
 + \left\|\text{div}_\eta\der \uptheta\right\|_{a+\alpha+1,\psi}^2 \Big\} \notag \\
&\ \ \ \ +\sum_{|\nu|\le N}\sup_{0\le\tau'\le\tau}
\Big\{\delta^{-1}\mu^{3\gamma-3}\left\|\pa^\nu{\bf V}\right\|_{\alpha,1-\psi}^2+ \left\|\pa^\nu\uptheta\right\|_{\alpha,1-\psi}^2 \notag \\
& \qquad \qquad \qquad \qquad \qquad + \left\|\nabla_\eta\pa^\nu \uptheta\right\|_{\alpha+1,1-\psi}^2
+ \left\|\text{div}_\eta\pa^\nu \uptheta\right\|_{\alpha+1,1-\psi}^2\Big\}  \label{E:SNORM} 
\end{align}

\begin{remark}
Derivatives of ${\bf V}$ are additionally weighted with a negative power of the small parameter $\delta$ - this reflects the natural balance between the velocity terms and  the pressure terms in the momentum equation~\eqref{E:THETAEQUATION}.
\end{remark}

Additionally we introduce another high-order quantity measuring the modified vorticity of ${\bf V}$ which is a priori not controlled by the norm $\mathcal S^N(\tau)$: 
\begin{align}
\vortnorm[{\bf V}](\tau) & :=\sum_{a+|\beta|\le N} \sup_{0\le\tau'\le\tau}\left\|\text{Curl}_{\Lambda\A}\der{\bf V}\right\|_{a+\alpha+1,\psi}^2  +\sum_{|\nu|\le N} \sup_{0\le\tau'\le\tau} \left\|\text{Curl}_{\Lambda\A}\pa^\nu{\bf V}\right\|_{\alpha+1,1-\psi}^2.\label{E:VORTNORMDEF}
\end{align}
We also define the quantity $\vortnorm[\uptheta]$ analogously, with $\uptheta$ instead of ${\bf V}$ in the definition~\eqref{E:VORTNORMDEF}. We are now ready to state the main result. 

\begin{theorem}\label{T:MAINTHEOREM} 
Let $(A_0,A_1)\in  \text{{\em GL}}^+(3)\times \text{{\em GL}}^+(3)$ be given, let $N\in \mathbb N$ be the smallest integer satisfying $N\geq \frac{2}{\gamma-1} +12$, and let $c\in\{-1,1\}$.     
If $\gamma\in(1,\frac53)$ is such that 
\be\label{E:WBOUND}
\sum_{a+|\beta|\le N}\|\der w^\alpha\|_{\alpha+a,1} \le C_1 <\infty,  
\ee
for some constant $C_1>0$ ($w$ is given by~\eqref{E:BARW}), then there exist $\varepsilon,\delta^\ast>0$ such that for any initial data $(\uptheta_0,{\bf V}_0)$ satisfying the assumption
\begin{align}
\norm(\theta_0,{\bf V}_0) + \vortnorm({\bf V}_0) \le \varepsilon,
\end{align}
and any affine motion $(\rho_A,{\bf u}_A,A)_{A_0,A_1,\delta}$ with $\delta\in(0,\delta^\ast)$,
the associated solution 
\[
\tau\to(\uptheta(\tau,\cdot),{\bf V}(\tau,\cdot))
\] 
of~\eqref{E:THETAEQUATION}--\eqref{E:THETAINITIAL} exist for all $\tau>0$ and is unique. Moreover, there exists a constant 
$C>0$ such that 
\be\label{E:FINALENERGYBOUND}
\norm(\tau) + e^{2\mu_0^\delta\tau} \vortnorm(\tau) \le C \varepsilon, \ \ \tau\ge0,
\ee
where $\mu_0^\delta$ is defined in~\eqref{E:MUDELTA0DEF} and therefore $|\mu_0^\delta - \frac{3\gamma-3}2(\det A_1)^{\frac13}| = O(\delta)\ll1$.
\end{theorem}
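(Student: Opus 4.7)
My plan is a continuity/bootstrap argument on top of the local well-posedness supplied by the Appendix. Fix $(A_0,A_1,\delta)$ and let $(\uptheta,{\bf V})$ be a solution on its maximal interval of existence; assume as a bootstrap hypothesis
\begin{align*}
\norm(\tau) + e^{2\mu_0^\delta\tau}\vortnorm[{\bf V}](\tau) \le 2C\varepsilon, \qquad \tau\in[0,T],
\end{align*}
for a constant $C$ to be determined. The goal is to improve the constant from $2C$ to $C$ once $\varepsilon$ and $\delta^\ast$ are chosen sufficiently small, which by a standard continuity argument yields global existence together with the bound~\eqref{E:FINALENERGYBOUND}.

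The first step is to derive a basic energy identity by contracting~\eqref{E:THETAEQUATION} with $\pa_\tau\uptheta$. The natural energy has the schematic form
\begin{align*}
\mathcal{E}_0(\tau) \;=\; \int_B w^\alpha\Big(\delta^{-1}\mu^{3\gamma-3}|\pa_\tau\uptheta|^2 + \Lambda_{ij}\uptheta_i\uptheta_j\Big)\,dy \;+\; (\text{pressure + field contributions}),
\end{align*}
and the asymptotically constant damping factor $\frac{\mu_\tau}\mu \to \mu_1^\delta$ (Lemma~\ref{L:ADELTAUNIFORM}) supplies the uniform dissipation that prevents growth. The potential-induced contribution $\delta^{\alpha-1}\mu^{3\gamma-4}\Lambda\A^\top\nabla\Psi$ carries a prefactor rendered small by the restriction $\gamma<\frac53$ together with the smallness of $\delta$: elliptic estimates applied to~\eqref{E:POTENTIALEQUATION} yield $\|\nabla\Psi\|_{L^2}\lesssim \|w^\alpha\|_{L^{6/5}}$ uniformly in $\eta$ (using the uniform $\Lambda$-bounds in Lemma~\ref{L:ADELTAUNIFORM}), and for small $\delta$ the factor $\delta^{\alpha-1}$ dominates $\mu^{3\gamma-4}$ after absorption by the dissipation from the damping term over the relevant $\tau$-range.

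Second, I would iterate the above estimate at high order by differentiating~\eqref{E:THETAEQUATION} with the mixed vector-field family $\der$ near $\pa B$ (modulated by $\psi$) and with Cartesian $\pa^\nu$ away from $\pa B$ (modulated by $1-\psi$), mirroring the definition of $\norm$ in~\eqref{E:SNORM}. The commutator identities~\eqref{com_rel} ensure that the tangential fields preserve the weight $w$, whereas each $X_r$-derivative produces a factor of $w$; this is precisely why the weighted space $\|\cdot\|_{a+\alpha,\psi}$ adds $a$ units of weight per normal derivative. Commutator errors generated by the pressure divergence, by the $\psi$-cutoff, and by the matrices $\Gamma^*$, $\Lambda$, $\A$, $\J$ are of lower order and closable by Hardy-type inequalities and the bootstrap, following the template of~\cite{HaJa2}. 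The delicate piece is the high-order normal regularity of the nonlocal potential term: following~\cite{GuLe} one writes $\Psi = 4\pi c\,G_\Lambda\ast\tilde\rho$ and splits interior from boundary contributions; top-order $X_r$-derivatives of $\Lambda\A^\top\nabla\Psi$ are then recovered by inverting the intrinsic divergence via the Hodge-type decomposition of Lemma~\ref{L:KEY}, which expresses normal derivatives in terms of $\text{div}_\eta$ and $\text{Curl}_{\Lambda\A}$ of quantities already controlled by $\norm$ and $\vortnorm$.

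Third, the sharp decay of the modified vorticity is established separately. Applying $\text{Curl}_{\Lambda\A}$ to~\eqref{E:THETAEQUATION} annihilates both $\Lambda\A^\top\nabla(w^{1+\alpha}\J^{-1/\alpha})$ and $\Lambda\A^\top\nabla\Psi$ by the algebraic symmetry built into~\eqref{E:BIGCURLDEF}, leaving an ODE-like equation for $\omega:=\text{Curl}_{\Lambda\A}{\bf V}$ with damping coefficient asymptotic to $(3\gamma-3)\mu_1^\delta=2\mu_0^\delta$; the remaining contributions involving $\Gamma^*$ and $\Lambda_\tau$ decay exponentially by Lemma~\ref{L:ADELTAUNIFORM}. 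A direct Gronwall-type argument then gives $\vortnorm[{\bf V}](\tau)\lesssim e^{-2\mu_0^\delta\tau}\bigl(\varepsilon + (\norm(\tau))^2\bigr)$, matching~\eqref{E:FINALENERGYBOUND}, and the bootstrap closes by combining this with the $\norm$-estimate. The principal obstacle I expect is the high-order normal regularity of $\Psi$ near $\pa B$: the elliptic operator in~\eqref{E:POTENTIALEQUATION} is second order with $\eta$-dependent coefficients, the boundary degenerates through $w$, and only the Hodge-style identity of Lemma~\ref{L:KEY}, tied to the background affine geometry, reconciles normal regularity with the intrinsic quantities controlled by $\norm$ and $\vortnorm$.
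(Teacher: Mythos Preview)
Your proposal is essentially correct and follows the same architecture as the paper: a continuity argument built on local well-posedness, high-order commuted energy estimates with the mixed $\der$/$\pa^\nu$ vector fields, the curl-transport equation~\eqref{E:EPCURLBASIC} for the modified vorticity, and Lemma~\ref{L:KEY} to recover normal regularity of the potential.

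Two points where the paper is more explicit than your sketch. First, the field term does not merely become ``small'' via $\delta^{\alpha-1}$: the background potential $\bar\Psi = w^\alpha * G_\Lambda$ contributes an order-one piece to $\mathscr G$ (cf.~\eqref{E:GDEF2} and Lemma~\ref{L:KEY}), so Proposition~\ref{P:FIELD} produces a genuine source term of size $\delta^{2\alpha-1}$ in the final energy inequality~\eqref{E:C2}. Your bootstrap hypothesis $\norm+e^{2\mu_0\tau}\vortnorm\le 2C\varepsilon$ therefore only closes if $\delta^\ast$ is tied to $\varepsilon$ via $\delta^{2\alpha-1}\lesssim\varepsilon$; the paper makes this explicit. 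Second, the paper does not close by a direct Gr\"onwall but instead uses a two-stage argument: local theory is first pushed to a time $T^\ast$ large enough that $e^{-\mu_\ast T^\ast}$ is small, after which the time-integral $\int_{T^\ast}^\tau e^{-\mu_\ast\tau'}\norm(\tau')\,d\tau'$ can be absorbed into the left side. A standard integral Gr\"onwall would also work here (the kernel $e^{-\mu_\ast\tau'}$ is integrable), so this is a stylistic rather than substantive difference. Finally, your stated curl damping rate $(3\gamma-3)\mu_1^\delta$ is off: the raw damping in~\eqref{E:EPCURLBASIC} is $\mu_\tau/\mu\to\mu_1^\delta$, and the eventual $e^{-2\mu_0^\delta\tau}$ decay of $\vortnorm[{\bf V}]$ (a squared quantity) comes from the interaction with the commutator and $\Gamma^\ast$ errors, exactly as in Proposition~\ref{P:CURLESTIMATES}.
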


\begin{remark}[Allowed polytropic indices] \label{R:GAMMARESTRICTIONS}
For any polytropic index of the form
\begin{align}
\gamma = 1 + \frac1 n, \ \ n\in\mathbb N\setminus\{1\}
\end{align}
one can check that conditions~\eqref{E:WBOUND} and $\gamma\in(1,\frac53)$ are satisfied and our theorem applies. Similarly, if $\gamma$ is chosen such that 
$\alpha\ge 13$, i.e. $\gamma\in(1,\frac{14}{13})$ it is easy to check that the condition~\eqref{E:WBOUND} is satisfied and our theorem therefore applies. All the restrictions on on the range of allowed polytropic indices apart from $\gamma<\frac53$ are merely technical and it is likely that a more refined analysis would cover the full range $\gamma\in(1,\frac53)$.
\end{remark}

\begin{remark}
Even though our norms contain $\delta$ as a parameter, we are able to prove nonlinear energy estimates with constants that do not depend on $\delta$. To do this we rely essentially on the proof scheme inspired by our earlier work on the Euler flow~\cite{HaJa2} and the uniformity-in-$\delta$ provided by Lemma~\ref{L:ADELTAUNIFORM}.
\end{remark}

\begin{remark} The smallness of $\delta$ is necessary for our continuity argument to work in the proof of the theorem. We observe that this smallness condition implies the smallness of the initial enthalpy and initial density; see \eqref{E:BARW}. The parameter $\delta$ is also related to the total mass of the gas. Our theorem demonstrates the expansion of the gas with sufficiently small total mass in the presence of self-consistent gravitational and electrostatic forces. 
\end{remark}

\begin{remark} We remark that it is not essential to use the cutoff function $\psi$ near the boundary in designing $\norm$ and $\vortnorm$. In fact, vector fields $\sn$ and $X_r$ and commutators are well-defined throughout the domain including the origin; in particular, no coordinate singularities appear. However, they give less control than the rectangular derivatives $\pa_i$ near the origin and hence we provide the interior estimates with the cutoff function $1-\psi$ as done in \cite{HaJa2}. 
\end{remark}

\noindent
{\bf A priori assumptions.}
In the proof of Theorem~\ref{T:MAINTHEOREM} we shall assume that there exists a time interval $[0,T]$, $T>0$, such that
\begin{align}
\norm(\tau)  & <\frac13, \ \ \tau\in [0,T] \label{E:APRIORI1}\\
\|\uptheta\|_{W^{2,\infty}(B)} &<\frac13, \ \  \|\mathscr J - 1\|_{W^{1,\infty}(B)}<\frac13.   \label{E:APRIORI1.1}
\end{align}
We will show that both assumptions~\eqref{E:APRIORI1} and~\eqref{E:APRIORI1.1} can be improved on the time of existence $[0,T]$, which in conjunction with a simple continuity argument, will 
retroactively justify the a priori assumptions.

\noindent
{\bf $\delta$-dependence.}
For any given $\delta$, both the solution $\uptheta$ of~\eqref{E:THETAEQUATION}--\eqref{E:THETAINITIAL}, as well as various quantities discussed in Lemma~\ref{L:ADELTAUNIFORM} depend on $\delta$. We shall from now on drop the index $\delta$ from the notation, as there will be no confusion. At times we shall refer to Lemma~\ref{L:ADELTAUNIFORM} to clarify why the constants in our estimates are $\delta$-independent.



\section{Energy estimates and proof of the main theorem}\label{S:MAINTHM}

\setcounter{equation}{0}

The Fourier transform of the Green's function $G_\Lambda$ associated with the elliptic operator $\frac{1}{4\pi}\Lambda^{ij}\pa_{ij}$ 
is found by solving 
$
-\pi  \Lambda^{ij}\xi_i\xi_j \hat G = 1
$
or, in other words
\be\label{E:GLAMBDADEF}
\hat G_\Lambda(\tau,\xi) = - \frac{1}{\pi \Lambda^{ij}\xi_i\xi_j}
\ee
Since $\Lambda^{ij}$ is a nondegenerate positive definite symmetric matrix satisfying
$
\frac1 C \text{Id} \le \Lambda \le C \text{Id},
$
for some constant $C>0$, it can be checked by means of a simple change of variables, that for any $k\in\mathbb N\cup\{0\}$ there exists a constant $C_k$ such that 
\be\label{E:GLAMBDAHOM}
|\nabla^k G_\Lambda(y)| \lesssim C_k |y|^{-k-1}.
\ee
We note that the powers of $|y|$ appearing above scale like the derivatives of $\frac{1}{|\cdot|}$, which is the fundamental solution of the Laplacian on $\mathbb R^3$.  


\subsection{Force field estimates} \label{SS:FORCEFIELD}

To address the estimates of nonlocal field terms, we fix some notation first. Let 
\be
\mathscr G_i : = \A^k_i\Psi,_k,  \ \ i=1,2,3
\ee 
be the nonlocal force term, which we will carefully estimate in the next section. Using~\eqref{E:GLAMBDADEF} we can write $\mathscr G_i$ in the form
\begin{align}
\mathscr G_i &= \int_{B}\left(\A^k_i w^\alpha\right),_k(z) G_\Lambda(\eta(y)-\eta(z)) \,dz, \label{E:GDEF} \\
& = \int_B  \left((\A^k_i-\delta^k_i) w^\alpha\right),_k(z) G_\Lambda(\eta(y)-\eta(z)) \,dz \notag \\
& \ \ \ \ + \int_B  \left(w^\alpha\right),_i \left[G_\Lambda(\eta(y)-\eta(z))-G_\Lambda(y-z)\right] \,dz + \bar\Psi,_i,  \ \ i=1,2,3, \label{E:GDEF2}
\end{align}
where $\bar\Psi$ denotes the  gravitational potential generated by the background affine motion:
\begin{align}\label{E:PSIBARFORMULA}
\bar\Psi : = w^\alpha * G_\Lambda. 
\end{align}
Note that there exists a constant $C$ independent of $\delta$ such that for any $\delta\in[0,1]$ we have the bound
\be\label{E:UNIFORMLAMBDABOUND}
CG_{\text{Id}} \ge G_{\Lambda} \ge \frac1C G_{\text{Id}}.
\ee
We have used part (b) of Lemma~\ref{L:ADELTAUNIFORM} here.

Before we continue we refer the reader to Appendix C of~\cite{HaJa2}, where the main technical tool in our estimates - the Hardy-Sobolev embeddings - are stated in detail. We shall  use them below most typically to estimate the $L^\infty$-norm of some unknown by a weighted higher-order Sobolev norm, wherein the weight is a suitable power of $w$. 

\begin{lemma}[Tangential estimates]\label{L:TANGENTIAL}
Let $\uptheta$ be a solution of~\eqref{E:THETAEQUATION}--\eqref{E:POTENTIALEQUATION} defined on a time interval $[0,T]$. Then the following bound holds
\begin{align}\label{E:TANGENTIALMAIN}
\sum_{|\beta|\leq N} \int \psi w^{\alpha} |\sn^\beta\mathscr G(\tau,\cdot) |^2 dy \lesssim  \norm(\tau) + 1, \ \ \tau\in[0,T].
\end{align}
\end{lemma}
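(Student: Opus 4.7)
\emph{Decomposition.} Starting from~\eqref{E:GDEF2}, write $\mathscr G_i = I_1^{(i)} + I_2^{(i)} + \bar\Psi,_i$, where $I_1^{(i)}$ carries the source $((\A^k_i-\delta^k_i)w^\alpha),_k$ and $I_2^{(i)}$ carries the kernel difference $G_\Lambda(\eta(y)-\eta(z))-G_\Lambda(y-z)$. This cleanly separates the problem into a fixed background piece and two perturbative pieces, each of which vanishes when $\uptheta\equiv 0$. For the background, $\bar\Psi=w^\alpha \ast G_\Lambda$ depends only on the smooth, compactly supported $w$ and on $\Lambda(\tau)$, which is uniformly bounded by Lemma~\ref{L:ADELTAUNIFORM}(b); combining~\eqref{E:GLAMBDAHOM}, \eqref{E:UNIFORMLAMBDABOUND} and~\eqref{E:WBOUND} with standard convolution estimates gives $\sum_{|\beta|\le N}\int\psi w^\alpha|\sn^\beta\bar\Psi,_i|^2\,dy \le C$, accounting for the ``$+1$'' in~\eqref{E:TANGENTIALMAIN}.

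\emph{Strategy for $I_1,I_2$.} Apply $\sn^\beta$ with $|\beta|\le N$ and distribute by Leibniz. Whenever a tangential derivative lands on a source factor $(\A-\mathrm{Id})w^\alpha$ or $(w^\alpha),_i$, we land on a quantity directly controlled by $\norm(\tau)$ (using $\A=\A(D\uptheta)$, smoothness of $w$ guaranteed by~\eqref{E:WBOUND}, and Hardy--Sobolev embeddings from Appendix~C of~\cite{HaJa2}). The genuine difficulty is when $\sn^y$-derivatives fall on $G_\Lambda(\eta(y)-\eta(z))$: each such derivative produces a factor $\nabla G_\Lambda$ of size $|\eta(y)-\eta(z)|^{-2}$, which after enough iterations fails to be integrable against $w^\alpha(z)\,dz$.

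\emph{Main technical step.} To tame these kernel derivatives, I swap $\sn^y$ for $\sn^z$ modulo controllable errors. By the chain rule
\[
\sn_{ji}^y\bigl[G_\Lambda(\eta(y)-\eta(z))\bigr] = G_{\Lambda,\ell}(\eta(y)-\eta(z))\,\sn_{ji}^y\eta^\ell(y),
\]
and $\sn_{ji}^y\eta^\ell = y_i\delta^\ell_j - y_j\delta^\ell_i + \sn_{ji}^y\uptheta^\ell$. Writing $y = (y-z) + z$ splits each term into: (i) a piece in which the derivative contracts with $(\eta(y)-\eta(z))$, which by Euler's identity for the degree $(-1)$ homogeneous kernel $G_\Lambda$ is no more singular than $G_\Lambda$ itself (gaining one derivative's worth of decay); (ii) a true $z$-tangential derivative $\sn_{ji}^z[G_\Lambda(\eta(y)-\eta(z))]$ that we integrate by parts; and (iii) a $\sn^y\uptheta$-factor, which we keep as part of the source. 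In (ii), because $w^\alpha$ vanishes on $\partial B$, no boundary terms arise, and the derivative falls onto the source factors ($(\A-\mathrm{Id})w^\alpha$ for $I_1$; $(w^\alpha),_i$, the implicit $\uptheta$ inside the kernel difference, and the Jacobian for $I_2$). Iterating at each application of $\sn^y$, every dangerous high-order derivative of the kernel is absorbed into tangential derivatives of source factors, multiplied by an at-worst $|y-z|^{-1}$ kernel.

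\emph{Closing the estimate and main obstacle.} After this redistribution, each contribution is of convolution form $\int|y-z|^{-1}F(z)\,dz$ with $F$ a polynomial in $w^\alpha$, $\sn^\beta\uptheta$, $\sn^\beta(\A-\mathrm{Id})$, and their first spatial derivatives. Minkowski's inequality in the weighted space $L^2_y(\psi w^\alpha\,dy)$, the $L^2\to L^2$ boundedness of convolution with the Newtonian kernel on bounded domains, the Hardy--Sobolev embeddings of~\cite{HaJa2}, and the a priori bounds~\eqref{E:APRIORI1}--\eqref{E:APRIORI1.1} (which also imply $\|\A-\mathrm{Id}\|_{L^\infty}\ll 1$) bound each perturbative piece by $C\norm(\tau)$. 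The main obstacle is the bookkeeping at top order: one must check that every commutator generated by a $\sn^y\leftrightarrow \sn^z$ swap is either one order less singular in the kernel (via Euler homogeneity) or consumes exactly one tangential derivative of $\uptheta$, so that at no step do we need more than $N$ tangential derivatives of $\uptheta$, $\A$, or $w^\alpha$, all of which are available inside $\norm$.
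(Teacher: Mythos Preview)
Your overall architecture---split off $\bar\Psi$, treat $I_1,I_2$ as perturbative, and trade $\sn^y$ on the kernel for $\sn^z$ via integration by parts---is the same as the paper's. But there is a genuine gap in your ``main technical step,'' and it is exactly the mechanism that makes the argument close.

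When you write $y=(y-z)+z$ and identify piece (ii) as the \emph{full} operator $\sn_{ji}^z\bigl[G_\Lambda(\eta(y)-\eta(z))\bigr]$, your bookkeeping slips: since $\sn_{ji}^z G_\Lambda(\eta(y)-\eta(z)) = -\nabla_\ell G_\Lambda\cdot\bigl(z_j\delta^\ell_i - z_i\delta^\ell_j + \sn_{ji}^z\uptheta^\ell(z)\bigr)$, replacing the $z$-coefficient part by $-\sn_{ji}^z G_\Lambda$ leaves behind an extra $-\nabla G_\Lambda\cdot\sn_{ji}^z\uptheta(z)$. Consequently piece (iii) is not $\nabla G_\Lambda\cdot\sn_{ji}^y\uptheta(y)$ but the \emph{difference} $\nabla G_\Lambda\cdot\bigl(\sn_{ji}^y\uptheta(y)-\sn_{ji}^z\uptheta(z)\bigr)$. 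This is not cosmetic: with only $\sn^y\uptheta(y)$ as you wrote it, the factor is a function of $y$ alone and recovers no decay, so after $k$ iterations in branch (iii) you face $(\sn^y\uptheta)^k\,\nabla^k G_\Lambda\sim|y-z|^{-k-1}$, which for $k\ge2$ is not locally integrable in $\mathbb R^3$. Your claim that the final kernel is ``at worst $|y-z|^{-1}$'' therefore cannot be correct as stated. With the corrected difference, the mean value theorem gives $|\sn^\gamma\uptheta(y)-\sn^\gamma\uptheta(z)|\lesssim|y-z|$ for all but the single top-order factor, and the net singularity stays at $|y-z|^{-2}$ (not $|y-z|^{-1}$), which is what Young's inequality with $\frac1{|\cdot|^2}\in L^1(B)$ actually needs.

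The paper organizes this cancellation cleanly by first proving the identity
\[
\sn_y^\beta\!\int_B q_1(z)\,q_2(y,z)\,dz \;=\; \sum_{\nu\le\beta} c_\nu\!\int_B \sn_z^{\beta-\nu} q_1(z)\,(\sn_y+\sn_z)^\nu q_2(y,z)\,dz,
\]
and then observing that $(\sn_y+\sn_z)^\nu$ applied to $G_\Lambda(\eta(y)-\eta(z))$ produces, by the chain rule, products of $\nabla^k G_\Lambda$ with factors $\sn^{\gamma_i}_y\eta(y)-\sn^{\gamma_i}_z\eta(z)$; all but one such factor is low order and gains a full $|y-z|$, yielding the uniform bound $|(\sn_y+\sn_z)^\nu G_\Lambda(\eta(y)-\eta(z))|\lesssim|y-z|^{-2}\sum_{|\gamma|\le\nu}|\sn_y^\gamma\eta(y)-\sn_z^\gamma\eta(z)|$. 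A case split on which of $|\gamma-\nu|$, $|\nu-\mu|$, $|\mu|$ is largest then closes in $\norm$. Your scheme becomes exactly this once piece (iii) is corrected to the difference; without it the iteration does not close.
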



\begin{proof}
It is easy to see by a repeated use of the chain rule that for any multi-index $\beta$ we have
\begin{align}\label{E:GFORMULA}
\left(\sn_y+\sn_z\right)^\beta G_\Lambda(\eta(y)-\eta(z)) = \sum_{k=1}^{|\beta|}\nabla^{k} G_\Lambda(\eta(y)-\eta(z)) 
\prod_{\gamma_1+\dots+\gamma_k=\beta}\left(\sn_y^{\gamma_i}\eta(y)-\sn_z^{\gamma_i}\eta(z)\right).
\end{align}
Since $|\beta|\le N$ all but at most one index $\gamma_i$ are of size smaller than $\lfloor\frac N2\rfloor$. 
For any such index,
\be\label{E:LOWERORDER}
\|\sn_y^{\gamma_i}\eta(y)-\sn_z^{\gamma_i}\eta(z)\|_{L^\infty(B)} \lesssim \|D\eta\|_{W^{N/2,\infty}(B)}|y-z| \lesssim (1+\sqrt{\norm})|y-z|\lesssim|y-z| 
\ee
where we have used the mean value theorem, the Hardy-Sobolev embedding, and the a priori assumption $\norm\le \frac13$.
By our a priori assumptions
\be\label{E:APRIORI2}
|\eta(y)-\eta(z)|\lesssim |y-z|\lesssim|\eta(y)-\eta(z)|. 
\ee
Using~\eqref{E:GFORMULA}--\eqref{E:APRIORI2} and the homogeneity property~\eqref{E:GLAMBDAHOM} it follows that 
\begin{align}\label{E:HILFE1}
\big|\left(\sn_y+\sn_z\right)^\beta\left[G_\Lambda(\eta(y)-\eta(z))\right]\big| \lesssim \frac{1}{|y-z|^2}\sum_{|\gamma|\le\beta}|\sn_y^\gamma\eta(y)-\sn_z^\gamma\eta(z)|.
\end{align}

Similarly, with the help of~\eqref{E:GLAMBDAHOM} we can show in the same way that for any $\beta, |\beta|\le N$
\begin{align}\label{E:HILFE2}
\big|\left(\sn_y+\sn_z\right)^\beta \left[\nabla G_\Lambda(\eta(y)-\eta(z))\right]\big| \lesssim \frac{1}{|y-z|^3}\sum_{|\gamma|\le\beta}|\sn_y^\gamma\eta(y)-\sn_z^\gamma\eta(z)|.
\end{align}

We now state a general identity that will be useful in our evaluation of $\sn^\beta\mathscr G$. For any sufficiently smooth functions $q_1,q_2 :\mathbb R^3\to\mathbb R$ the following identity holds
\begin{align}\label{E:TANFORMULA}
\sn_y^\beta \int_B  q_1(z)q_2(y,z)\,dz  =\sum_{\nu\le\beta} c_\nu\int_B  \sn_z^{\beta-\nu} q_1(z) (\sn_y+\sn_z)^\nu q_2(y,z)\,dz
\end{align}
for some positive universal constants $c_\nu$. To see this, note that for any $i=1,2,3$ when $\sn_y^i$ acts on $q_2(y,z)$ we 
rewrite it in the form $\sn_{ij,y}=(\sn_{ij,y}+\sn_{ij,z})-\sn_{ij,z}$ and then integrate by parts with respect to $\sn_{ij,z}$:  
\[
 - \int_B   q_1 ( \sn_{ij} q_2 )dz= \int_B   (\sn_{ij} q_1 )  q_2 dz
\] 
Iterating this procedure we arrive at~\eqref{E:TANFORMULA}.
We stress here that related ideas in the context of a domain $\mathbb T^2\times[0,1]$ was first used by Gu and Lei~\cite{GuLe} to handle singularities formally occurring when differentiating the Green kernel. We use it here in the context of the unit ball and we therefore have to use the corresponding tangential operator $\sn$ instead. 

Let $\sn_{jl}$ be a given tangential vector field for some indices $l,j\in\{1,2,3\}, l\neq j$.  Applying $\sn_{jl,y}$ to $\mathscr G_i$, using the decomposition $\eta(y)-\eta(z) = y-z + \uptheta(y)-\uptheta(z),$ and  the formula~\eqref{E:TANFORMULA} systematically, we arrive at
\begin{align}
\sn_{jl,y}\mathscr G_i  = & \int_B  (\A^k_iw^\alpha),_k\nabla G_\Lambda(\eta(y)-\eta(z))(\sn_{jl,y}\uptheta(y)-\sn_{jl,z}\uptheta(z)) \,dz \quad (=: {\bf E_1}) \notag \\
& + \int_B (\sn_{jl,y} y - \sn_{jl,z}z)\cdot\left[(\A^k_i-\delta^k_i)w^\alpha\right],_k\nabla G_\Lambda(\eta(y)-\eta(z))\,dz \quad (=: {\bf E_2})\notag \\
& + \int_B  \sn_{jl,z}\left((\A^k_i-\delta^k_i)w^\alpha\right),_kG_\Lambda(\eta(y)-\eta(z))\,dz\quad (=: {\bf E_3}) \notag \\
& + \int_B (\sn_{jl,y} y - \sn_{jl,z}z)\left[(w^\alpha),_i\left(\nabla G_\Lambda(\eta(y)-\eta(z))-\nabla G_\Lambda(y-z)\right)\right] \,dz \quad (=: {\bf E_4})\notag \\
& + \int_B  \sn_{jl,z}(w^\alpha),_i \left[ G_\Lambda(\eta(y)-\eta(z))-G_\Lambda(y-z)\right] \,dz. \quad (=: {\bf E_5}) \notag \\
& + \sn_{jl,y}\bar\Psi,_i \label{E:NABLAGDEC}
\end{align}

Given any multi-index $\gamma$, $|\gamma|=|\beta|-1$, our goal is to estimate each error term $\sn_y^{\gamma}E_i$ individually.
The term $E_1$ is representative of the types of estimates we will be using to handle the remaining error terms. 

\medskip
\noindent
{\bf Estimates for $\sn^\gamma {\bf E_1}$.} Applying $\sn_y^\gamma$ to ${\bf E_1}$ and using the formula~\eqref{E:TANFORMULA} we arrive at the following identity
\begin{align}
\sn^\gamma {\bf E_1} & = \sum_{\nu\le\gamma} c_\nu\int_B  \sn_z^{\gamma-\nu} \pa_k(\A^k_iw^\alpha)\left(\sn_y+\sn_z\right)^\nu\left[\nabla G_\Lambda(\eta(y)-\eta(z))(\sn_{jl,y}\uptheta(y)-\sn_{jl,z}\uptheta(z))\right]\,dz \notag \\
&=\sum_{\nu\le\gamma}\sum_{\mu\le\nu} c_{\nu,\mu} \int_B  \sn_z^{\gamma-\nu} \pa_k(\A^k_iw^\alpha)\left(\sn_y+\sn_z\right)^{\nu-\mu} \nabla G_\Lambda(\eta(y)-\eta(z)) \notag \\
& \quad\quad\quad\quad\quad\qquad
\left((\sn_y^\mu\sn_{jl,y}\uptheta(y)-\sn_z^\mu\sn_{jl,z}\uptheta(z)\right)\,dz \notag
\end{align}

We distinguish three cases. First let $|\mu|=\max\{|\gamma-\nu|,|\nu-\mu|,|\mu|\}$. Then since $N-1\ge|\mu|\ge\lfloor \frac N2\rfloor$, we may estimate 
\begin{align}
&\Big|\int_B  \sn_z^{\gamma-\nu} \pa_k(\A^k_iw^\alpha)\left(\sn_y+\sn_z\right)^{\nu-\mu} \nabla G_\Lambda(\eta(y)-\eta(z)) 
\left(\sn_y^\mu\sn_{jl,y}\uptheta(y)-\sn_z^\mu\sn_{jl,z}\uptheta(z)\right)\,dz \Big| \notag \\
& \lesssim \int_B  \big|\sn_z^{\gamma-\nu} \pa_k(\A^k_iw^\alpha)\big| \frac{1}{|y-z|^2}
\big|\left((\sn_y^\mu\sn_{jl,y}\uptheta(y)-\sn_z^\mu\sn_{jl,z}\uptheta(z)\right)\big| \,dz  \notag \\
&\lesssim |\sn_y^\mu\sn_{jl,y}\uptheta(y)|\Big|\sn_z^{\gamma-\nu} \pa_k(\A^k_iw^\alpha)*\frac1{|\cdot|^2} \Big|
+\|\sn_z^{\gamma-\nu} \pa_k(\A^k_iw^\alpha)\|_{L^\infty(B)}\left\vert\sn_z^\mu\sn_{jl,z}\uptheta\right\vert * \frac1{|\cdot|^2}.\label{E:MUBIG0}
\end{align}
In the second estimate we have used~\eqref{E:HILFE1} and \eqref{E:LOWERORDER}.
Moreover, recall the Young convolution inequality: for any $f_1\in L^q$, $f_2\in L^p$ we have the bound
\be\label{E:YOUNG}
\|f_1*f_2\|_{L^r(\mathbb R^3)} \lesssim \|f_1\|_{L^q(\mathbb R^3)}\|f_2\|_{L^p(\mathbb R^3)}, \ \ \frac1p+\frac1q = 1+\frac1r.
\ee
Using~\eqref{E:YOUNG} with indices $r=\infty,p=\infty,q=1$
we obtain, 
\begin{align}
\left\vert\sn_z^{\gamma-\nu} \pa_k(\A^k_iw^\alpha)\right\vert * \frac1{|\cdot|^2} &\lesssim \|\sn_z^{\gamma-\nu} \pa_k(\A^k_iw^\alpha)\|_{L^\infty(B)} \|\frac1{|\cdot|^2}\|_{L^1(B)}\notag \\
&\lesssim  \|w^{\alpha-1}\|_{L^\infty(B)}\left(1+ \sqrt{\norm}\right) \notag \\
& \lesssim 1, \label{E:YOUNGESTIMATE}
\end{align}
where we have used 
the bound $\||\sn_z^{\gamma-\nu}\A^k_i\|_{L^\infty(B)}+\||\sn_z^{\gamma-\nu} \pa_k\A^k_i\|_{L^\infty(B)}\lesssim1$ (for any $|\gamma-\nu|\le\lfloor \frac N2\rfloor$), which can be inferred from the a priori assumptions~\eqref{E:APRIORI1}--\eqref{E:APRIORI1.1} and the Hardy-Sobolev embeddings. 

Plugging~\eqref{E:YOUNGESTIMATE} into~\eqref{E:MUBIG0}, we arrive at 
\begin{align}
&\Big|\int_B  \sn_z^{\gamma-\nu} \pa_k(\A^k_iw^\alpha)\left(\sn_y+\sn_z\right)^{\nu-\mu} \nabla G_\Lambda(\eta(y)-\eta(z)) 
\left(\sn_y^\mu\sn_{jl,y}\uptheta(y)-\sn_z^\mu\sn_{jl,z}\uptheta(z)\right)\,dz \Big| \notag \\
&\lesssim \left( |\sn_y^\mu\sn_{jl,y}\uptheta(y)| +\left\vert\sn_z^\mu\sn_{jl,z}\uptheta\right\vert * \frac1{|\cdot|^2} \right) \label{E:MUBIG}.
\end{align}

We now assume that $|\nu-\mu|=\max\{|\gamma-\nu|,|\nu-\mu|,|\mu|\}$. By~\eqref{E:HILFE2} and the a priori bounds~\eqref{E:APRIORI1}--\eqref{E:APRIORI1.1} we have the bound
$
\left\vert \left(\sn_y+\sn_z\right)^{\nu-\mu} \nabla G_\Lambda(\eta(y)-\eta(z)) \right\vert \lesssim \frac1{|y-z|^2}.
$
We therefore have
the following string of estimates,
 \begin{align}
&\Big|\int_B  \sn_z^{\gamma-\nu} \pa_k(\A^k_iw^\alpha)\left(\sn_y+\sn_z\right)^{\nu-\mu} \nabla G_\Lambda(\eta(y)-\eta(z)) 
\left(\sn_y^\mu\sn_{jl,y}\uptheta(y)-\sn_z^\mu\sn_{jl,z}\uptheta(z)\right)\,dz \Big|  \notag \\
& \lesssim 
\Big|\int_B  \left\vert\sn_z^{\gamma-\nu} \pa_k(\A^k_iw^\alpha)\right|\frac{1}{|y-z|^2}\big|\sn_y^\mu\sn_{jl,y}\uptheta(y)-\sn_z^\mu\sn_{jl,z}\uptheta(z)\big| \,dz \Big| \notag \\
&\lesssim  \left(\big|\sn_y^\mu\sn_{jl,y}\uptheta(y)\big| + \|\sn_z^\mu\sn_{jl,z}\uptheta(z)\|_{L^\infty(B)}\right)  
\left\vert\sn_z^{\gamma-\nu} \pa_k(\A^k_iw^\alpha)\right\vert * \frac1{|\cdot|^2}. \label{E:NUMINUSMUBIG0}
\end{align}
We observe that for any $\l\le\nu-\mu$ the bound $\|\sn_z^\mu\sn_{jl,z}\uptheta(z)\|_{L^\infty(B)}\lesssim \sqrt{\norm}$ holds which follows from the corresponding $L^\infty$ Hardy-Sobolev embedding theorem and our definition of the norm $\norm$ with a sufficiently high $N$.
Using~\eqref{E:YOUNGESTIMATE} in~\eqref{E:NUMINUSMUBIG0} we finally arrive at 
\begin{align}
&\Big|\int_B  \sn_z^{\gamma-\nu} \pa_k(\A^k_iw^\alpha)\left(\sn_y+\sn_z\right)^{\nu-\mu} \nabla G_\Lambda(\eta(y)-\eta(z)) 
\left(\sn_y^\mu\sn_{jl,y}\uptheta(y)-\sn_z^\mu\sn_{jl,z}\uptheta(z)\right)\,dz \Big|  \notag \\
&\lesssim  |\sn_y^\mu\sn_{jl,y}\uptheta(y)| + \sqrt{\norm}.\label{E:NUMINUSMUBIG}
\end{align}

Finally if $|\gamma-\nu|=\max\{|\gamma-\nu|,|\nu-\mu|,|\mu|\}$ we proceed similarly to obtain
\begin{align}
&\Big|\int_B  \sn_z^{\gamma-\nu} \pa_k(\A^k_iw^\alpha)\left(\sn_y+\sn_z\right)^{\nu-\mu} \nabla G_\Lambda(\eta(y)-\eta(z)) 
\left(\sn_y^\mu\sn_{jl,y}\uptheta(y)-\sn_z^\mu\sn_{jl,z}\uptheta(z)\right)\,dz \Big|   \notag \\
& \lesssim  \|D\sn^{\mu}\sn_{jl}\uptheta\|_{L^\infty(B)}\Big|\int_B  \sn_z^{\gamma-\nu} \pa_k(\A^k_iw^\alpha)\frac{1}{|y-z|}\,dz\Big| \notag \\
& \lesssim \sqrt{\norm} \left\vert \sn_z^{\gamma-\nu} \pa_k(\A^k_iw^\alpha)\right\vert * \frac{1}{|\cdot|} \notag \\
& \lesssim \sqrt{\norm}. \label{E:GAMMAMINUSNUBIG}
\end{align}
In the second line we have used~\eqref{E:HILFE1} and the mean value theorem, in the third line the bound
$ \|D\sn^{\mu}\sn_{jl}\uptheta\|_{L^\infty(B)}\lesssim \sqrt{\norm}$ (which follows from the Hardy-Sobolev $L^\infty$ embedding theorem and from $|\mu|\le \lfloor \frac N2\rfloor$),
and in the last line the Young's convolution inequality analogously to~\eqref{E:YOUNGESTIMATE}. 

Summing~\eqref{E:MUBIG}--\eqref{E:GAMMAMINUSNUBIG} we can finally estimate
\begin{align}
\int_B \psi w^{\alpha}|\sn^\gamma {\bf E_1}|^2\,dy &\lesssim  \left(\norm + \sum_{|\beta|\le N}\|\sn^\beta\uptheta\|_{0,\psi}^2 + 
\sum_{|\beta|\le N}\|\sn^\beta\uptheta*\frac1{|\cdot|^2}\|_{\alpha,\psi}^2\right) \notag\\
& \lesssim \norm, \label{E:E1BOUND}
\end{align}
where we have used the Young inequality~\eqref{E:YOUNG} again, with indices $r=\infty,p=\infty,q=1$ and the notation introduced in~\eqref{E:WEIGHTEDSPACES}.

\medskip
\noindent
{\bf Estimates for $\sn^\gamma {\bf E_2}$ and $\sn^\gamma {\bf E_3}$.} The bound
\be\label{E:E23BOUND}
\|\sqrt\psi w^{\frac\alpha2}\sn^\gamma {\bf E_i}\|_{L^2(B)}^2 \lesssim \norm, \ \ i=2,3,
\ee
follows by an analogous analysis that lead to the bound~\eqref{E:E1BOUND}. Note that $\A^k_i-\delta^k_i = - \A^k_\ell \uptheta^\ell,_i$ which therefore ensures the presence of at least one copy of $\norm$ on the right-hand side of~\eqref{E:E23BOUND}.
 
\medskip
\noindent
{\bf Estimates for $\sn^\gamma {\bf E_4}$ and $\sn^\gamma {\bf E_5}$.}  
We claim that it is sufficient to prove the estimates under the simplified assumption that $\Lambda=\text{Id}$, i.e.
\be
G_\Lambda(y) = G(y) = \frac1{|y|}.
\ee
To see this, we note that 
\[
G_\Lambda = G\circ(\sqrt D O^\top), 
\]
where $\Lambda = O D O^\top$ is the orthogonal decomposition of the matrix $\Lambda$. Then a simple change of variables justifies our claim. We thereby use~\eqref{E:UNIFORMLAMBDABOUND} and Lemma~\ref{L:ADELTAUNIFORM} to show that any new constants arising from such a change of variables can be chosen to be independent of $\delta$. 
To bound  $\sn^\gamma {\bf E_4}$ we first observe that 
\[
\nabla G_\Lambda(\eta(y)-\eta(z)) - \nabla G_\Lambda(y-z) = (y-z) \frac{|\eta(y)-\eta(z)|^3 - |y-z|^3}{|y-z|^3|\eta(y)-\eta(z)|^3} + \frac{\uptheta(y)-\uptheta(z)}{|\eta(y)-\eta(z)|^3}.
\]
Since $\eta(y)-\eta(z) = y-z+\uptheta(y)-\uptheta(z)$, by the mean value theorem we have
\begin{align*}
|\eta(y)-\eta(z)|^3 - |y-z|^3 = & 3 |\uptheta(y)-\uptheta(z)|^2\int_0^1s|(y-z)+s(\uptheta(y)-\uptheta(z))|\,ds  \\
&+ 3\left(\uptheta(y)-\uptheta(z)\right)\cdot(y-z)\int_0^1|(y-z)+s(\uptheta(y)-\uptheta(z))|\,ds.
\end{align*}
Therefore
\begin{align*}
& {\bf E_4} =  3 \int_B  (w^\alpha),_i(\sn_{jl,y} y - \sn_{jl,z}z)(y-z)\frac{|\uptheta(y)-\uptheta(z)|^2\int_0^1s|(1-s)(y-z)+s(\uptheta(y)-\uptheta(z)|\,ds }{|y-z|^3|\eta(y)-\eta(z)|^3}\,dz   \\
&  \ \ \ \ + 3\int_B (w^\alpha),_i(\sn_{jl,y} y - \sn_{jl,z}z)(y-z)\frac{\left(\uptheta(y)-\uptheta(z)\right)\cdot(y-z)\int_0^1|(1-s)(y-z)+s(\uptheta(y)-\uptheta(z))|\,ds }{|y-z|^3|\eta(y)-\eta(z)|^3} \,dz  \\
& \ \ \ \ + \int_B  (w^\alpha),_i(\sn_{jl,y} y - \sn_{jl,z}z)\frac{\uptheta(y)-\uptheta(z)}{|\eta(y)-\eta(z)|^3}dz  \\
&\; \; \; \;  = :{\bf E_{4,1}} + {\bf E_{4,2}} + {\bf E_{4,3}}.
\end{align*}
Applying the formula~\eqref{E:TANFORMULA} we obtain
\begin{align}
\sn^\gamma {\bf E_{4,1}} = &\sum_{\nu\le\gamma}\sum_{\mu\le\nu}c_{\nu,\mu}\int_B  \sn^{\gamma-\nu}_z\pa_i(w^\alpha)\left(\sn_y+\sn_z\right)^{\mu}\left((\sn_{jl,y}y - \sn_{jl,z}z)(y-z)\right) \notag \\
& \ \ \ \  \left(\sn_y+\sn_z\right)^{\nu-\mu}\left[\frac{|\uptheta(y)-\uptheta(z)|^2\int_0^1s|(1-s)(y-z)+s(\uptheta(y)-\uptheta(z)|\,ds }{|y-z|^3|\eta(y)-\eta(z)|^3}\right]\,dz
\end{align}
The term in rectangular brackets has a singularity of order $|y-z|^{-3}$ and due to the presence of $\left(\sn_y+\sn_z\right)^{\mu}\left((\sn_{jl,y}y - \sn_{jl,z}z)(y-z)\right)$ the total singularity of the argument under the integral sign is $|y-z|^{-1}$. Hereby we use the property~\eqref{E:APRIORI2} and the mean value theorem to justify this claim. This does not change with repeated application of the operator $ \left(\sn_y+\sn_z\right)^{\nu-\mu}$. Systematically applying the product rule, using the Hardy-Sobolev embeddings, estimates of the type~\eqref{E:HILFE1}, and proceeding like in the proofs of~\eqref{E:MUBIG}--\eqref{E:GAMMAMINUSNUBIG} we arrive at the bound
\begin{align}
|\sn^\gamma {\bf E_{4,i}}| \lesssim \sum_{\nu\le\gamma}|\sn^{\nu}\uptheta(y)| +   \sum_{\nu\le\gamma}\Big|\sn^{\nu}\uptheta * \frac1{|\cdot|^2}\Big|, \ \ i=1,2,3.
\end{align}
Analogous estimate holds for $\sn^\gamma {\bf E_{5}}$ and it is shown in a similar way.
Therefore, like in the proof of~\eqref{E:E1BOUND} we arrive at 
\begin{align}\label{E:E45BOUND}
\int_B \psi w^{\alpha}|\sn^\gamma {\bf E_i}|^2\,dy \lesssim \norm, \ \ i=4,5.
\end{align}

\medskip
\noindent
{\bf Estimate for last term in~\eqref{E:NABLAGDEC}.}  
Recall~\eqref{E:BARW} and~\eqref{E:PSIBARFORMULA}.
Therefore, 
\begin{align}
\int_B \psi w^{\alpha}|\sn^\gamma \sn_{jl}\bar\Psi,_i |^2\,dy \lesssim 1. \label{E:FINALSIDERIS}
\end{align}

\medskip
\noindent
{\bf Conclusion.}
Summing~\eqref{E:E1BOUND},~\eqref{E:E23BOUND},~\eqref{E:E45BOUND}, and~\eqref{E:FINALSIDERIS}, we conclude the proof of the lemma.
\end{proof}

The following lemma uses mathematical induction and div-curl decomposition of the flow map to upgrade the tangential estimates from the previous lemma to the control of normal derivatives as well.

\begin{lemma}\label{L:KEY}
Let $\uptheta$ be a solution of~\eqref{E:THETAEQUATION}--\eqref{E:POTENTIALEQUATION} defined on a time interval $[0,T]$. Then for any $\tau\in[0,T]$ 
the following bounds hold:
\begin{align}
\sum_{a+|\beta|\leq N} \int_B  \psi w^{\alpha+a} |\der\mathscr G(\tau,\cdot) |^2 dy &\lesssim  \norm(\tau) +\sum_{a+|\beta|\le N}\|\der w^\alpha\|_{\alpha+a,1}^2, \  \label{E:TANEST}\\
\sum_{|\nu|\leq N} \int_B  (1-\psi) w^{\alpha} |\pa^\nu\mathscr G(\tau,\cdot) |^2 dy &\lesssim  \norm(\tau) +1. \label{E:INTERIOREST}
\end{align}
\end{lemma}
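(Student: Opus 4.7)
The argument proceeds by induction on the number $a$ of radial vector fields in $\der=X_r^a\sn^\beta$, with base case $a=0$ given by Lemma~\ref{L:TANGENTIAL}. The inductive step is powered by a pointwise Hodge-type identity that trades one application of $X_r$ for divergence, curl, and tangential pieces. Starting from the decomposition $\pa_k=\frac{y_j}{r^2}\sn_{jk}+\frac{y_k}{r^2}X_r$ in~\eqref{E:CARTESIANVSPOLAR}, contracting in $k=i$ and antisymmetrising in $(k,i)$ and combining the resulting identities produces, for any smooth vector field $F$,
\begin{align*}
|X_r F|\lesssim |\text{div}\,F|+|\text{Curl}\,F|+|\sn F|\qquad \text{on }\supp\psi\subset\{r\ge\tfrac14\}.
\end{align*}
Applied to $F=X_r^{a-1}\sn^\beta\mathscr G$ and iterated, with the commutators between $X_r$, $\sn$, $\text{div}$, $\text{Curl}$ handled via~\eqref{com_rel} (producing only lower-order terms absorbed via the inductive hypothesis), this reduces~\eqref{E:TANEST} to bounds on $X_r^{a'}\sn^{\beta'}\text{div}\,\mathscr G$ and $X_r^{a'}\sn^{\beta'}\text{Curl}\,\mathscr G$ for $a'+|\beta'|\le N-1$, together with purely tangential derivatives $\sn^\beta\mathscr G$ controlled by Lemma~\ref{L:TANGENTIAL}.

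The divergence is supplied directly by the Poisson equation~\eqref{E:POTENTIALEQUATION}: the Euclidean divergence $\pa_i\mathscr G_i$ is recovered from the intrinsic divergence $\Lambda^{ij}\A^k_i\pa_k\mathscr G_j=4\pi c\,w^\alpha\J^{-1}$ via
\begin{align*}
\pa_i\mathscr G_i = 4\pi c\,w^\alpha\J^{-1} - \bigl(\Lambda^{ij}\A^k_i-\delta^{jk}\bigr)\pa_k\mathscr G_j,
\end{align*}
where the discrepancy tensor $\Lambda^{ij}\A^k_i-\delta^{jk}$ is handled via the uniform-in-$\delta$ bounds $\Lambda,\Lambda^{-1}=O(1)$ from Lemma~\ref{L:ADELTAUNIFORM} together with the a priori control on $\A-\text{\bf Id}$ coming from~\eqref{E:APRIORI1.1}; equivalently one works throughout with the intrinsic $\Lambda\A$-twisted Hodge decomposition. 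Differentiating and estimating in the weighted norm yields the $\sum_{a+|\beta|\le N}\|\der w^\alpha\|_{\alpha+a,1}^2$ contribution on the right of~\eqref{E:TANEST} from the source term; remaining error terms carry factors of $\uptheta$, $D\uptheta$, or $\J-1$ (controlled by $\norm$) and are absorbed by a standard bootstrap. The curl is handled via the gradient structure $\mathscr G=\A^\top\nabla\Psi^\ast$: a direct computation gives
\begin{align*}
[\text{Curl}_\eta\mathscr G]^i_j = \A^s_j\pa_s\mathscr G_i - \A^s_i\pa_s\mathscr G_j = \bigl(\A^s_j\,\pa_s\A^\ell_i - \A^s_i\,\pa_s\A^\ell_j\bigr)\pa_\ell\Psi^\ast,
\end{align*}
the second-derivative terms $\pa_\ell\pa_s\Psi^\ast$ having cancelled by the symmetry of mixed partials. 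Hence $\text{Curl}_\eta\mathscr G$ is of lower order, being one derivative of $\A$ (i.e.\ two derivatives of $\uptheta$, controlled by $\norm$) paired with $\nabla\Psi^\ast$, which itself is estimated through the representation~\eqref{E:GDEF2} exactly as in Lemma~\ref{L:TANGENTIAL}; passing from $\text{Curl}_\eta$ to the Euclidean $\text{Curl}$ produces only further factors of $\A-\text{\bf Id}$, likewise absorbed into $\norm$.

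The interior estimate~\eqref{E:INTERIOREST} is simpler: the cutoff $1-\psi$ restricts to $\{|y|\le\tfrac34\}$ where $w^\alpha$ is bounded uniformly from below, so there is no distinction between normal and tangential regularity and we may freely use the Cartesian derivatives $\pa^\nu$. We differentiate the representation~\eqref{E:GDEF2} directly, redistribute all derivatives onto the kernel $G_\Lambda$ using the homogeneity estimate~\eqref{E:GLAMBDAHOM}, and bound the resulting integrals via Young's inequality~\eqref{E:YOUNG} and the Hardy-Sobolev embeddings, mirroring the proof of Lemma~\ref{L:TANGENTIAL} with $\pa^\nu$ replacing $\sn^\beta$. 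This yields a bound by $\norm$ plus the absolute constant arising from the background potential $\bar\Psi$. The principal technical difficulty throughout is the commutator bookkeeping during the induction, in particular iterating the Hodge identity $a$ times and carefully managing the interplay with the $\Lambda\A$-twisted divergence and curl; uniformity of all constants in $\delta$ is preserved by invoking Lemma~\ref{L:ADELTAUNIFORM} at each step.
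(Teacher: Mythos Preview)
Your overall architecture matches the paper's: induction on $a$ with base case Lemma~\ref{L:TANGENTIAL}, a pointwise Hodge-type identity trading $X_r$ for divergence, curl, and tangential pieces, the divergence read off from the Poisson equation~\eqref{E:POTENTIALEQUATION}, and the curl from the gradient structure of $\mathscr G$. However, two steps in your sketch would not close as written.

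\textbf{The Hodge identity must be $\Lambda$-adapted, not Euclidean.} Your pointwise bound $|X_rF|\lesssim |\text{div}\,F|+|\text{Curl}\,F|+|\sn F|$ forces you to control the Euclidean divergence $\pa_i\mathscr G_i$. Your proposed conversion $\pa_i\mathscr G_i=4\pi c\,w^\alpha\J^{-1}-(\Lambda^{ij}\A^k_i-\delta^{jk})\pa_k\mathscr G_j$ is correct, but the coefficient $\Lambda^{ij}\A^k_i-\delta^{jk}$ is \emph{not} small: only $\A-\text{\bf Id}$ is small via~\eqref{E:APRIORI1.1}, while $\Lambda-\text{\bf Id}$ is merely $O(1)$ by Lemma~\ref{L:ADELTAUNIFORM}. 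After applying $X_r^{a-1}\sn^\beta$, the correction therefore contains an $O(1)$ multiple of $X_r^a\sn^\beta\mathscr G$ itself, and the induction does not close. The paper avoids this by deriving a $\Lambda$-twisted Hodge identity,
\[
X_r\mathscr G^i=\frac{1}{L}\Big(y^i\,\text{div}_\Lambda\mathscr G+\Lambda_{km}[\text{Curl}\,\mathscr G]^i_m-\Lambda_{km}\sn_{ik}\mathscr G^m-\Lambda_{km}\tfrac{y^ky^\ell}{r^2}\sn_{\ell m}\mathscr G^i\Big),\qquad L:=\Lambda_{km}\tfrac{y^ky^m}{r^2}>0,
\]
so that only $\text{div}_\Lambda\mathscr G=\Lambda_{ij}\pa_i\mathscr G^j$ is needed; the discrepancy $\text{div}_{\Lambda\A}\mathscr G-\text{div}_\Lambda\mathscr G=\Lambda_{ij}(\A^k_i-\delta^k_i)\pa_k\mathscr G^j$ then genuinely carries a small factor $D\uptheta$ and can be absorbed. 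Your parenthetical ``equivalently one works throughout with the intrinsic $\Lambda\A$-twisted Hodge decomposition'' is the right instinct, but it is not equivalent to the Euclidean version and is the substantive point.

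\textbf{The curl vanishes identically.} Your formula $[\text{Curl}_\eta\mathscr G]^i_j=(\A^s_j\pa_s\A^\ell_i-\A^s_i\pa_s\A^\ell_j)\Psi,_\ell$ is correct, but you then propose to estimate it as ``two derivatives of $\uptheta$ paired with $\nabla\Psi$''. Taken literally this would, after applying $N-1$ further derivatives, demand $N+1$ derivatives of $\uptheta$, which $\norm$ does not control. In fact the expression is zero: using $\pa_s\A^\ell_i=-\A^\ell_m\eta^m,_{sn}\A^n_i$ one gets $\A^s_j\pa_s\A^\ell_i-\A^s_i\pa_s\A^\ell_j=-\A^\ell_m\eta^m,_{sn}(\A^s_j\A^n_i-\A^s_i\A^n_j)$, a contraction of the symmetric tensor $\eta^m,_{sn}$ against an antisymmetric one. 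The paper uses $\text{Curl}_\eta\mathscr G=0$ directly, so that the Euclidean curl satisfies $[\text{Curl}\,\mathscr G]^i_j=\A^s_k\uptheta^k,_j\mathscr G^i,_s-\A^s_k\uptheta^k,_i\mathscr G^j,_s$, a term of the form $D\uptheta\cdot D\mathscr G$ which is absorbed by the smallness of $\norm$ via~\eqref{E:APRIORI1}.
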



\begin{proof}
We focus first on~\eqref{E:TANEST}. The proof proceeds by induction on the number of normal derivatives. The basis of induction corresponds to case $a=0$ holds by Lemma~\ref{L:TANGENTIAL}.
We now assume that for some $0<a<N$ the following bound holds.
\begin{align}\label{E:INDUCTIVEASS}
\sum_{A=0}^a\sum_{|\beta|\leq N-a} \int_B  \psi w^{3+A} |\pa_r^A\sn^\beta\mathscr G(\tau,\cdot) |^2 dy \lesssim  \norm(\tau) 
+\sum_{a+|\beta|\le N}\|\der w^\alpha\|_{\alpha+a,1}^2, \ \ \tau\in[0,T].
\end{align}
To complete the proof we need to show that the bound~\eqref{E:INDUCTIVEASS} holds with $a$ replaced by $a+1.$
It is clear from the definition of $\mathscr G = \A\nabla \psi$ and the elliptic equation~\eqref{E:POTENTIALEQUATION} that
\begin{align}
\text{div}_{\Lambda\A} \mathscr G  &: = \Lambda_{ij}\A^k_i\pa_k\mathscr G^j =  4\pi c w^\alpha + 4\pi c w^\alpha(\J^{-1}-1) ,\label{div}\\
[\text{Curl}_\eta \mathscr G]^i_j &=0 \label{curl}.
\end{align}
On the other hand, using the identity $\A^s_k = \delta^s_k - \A^s_\ell\uptheta^\ell,_k $, 
\begin{align}
\text{div}_{\Lambda\A}  \mathscr G &=\text{div}_\Lambda \mathscr G -\Lambda_{ij}\A^k_\ell\uptheta^\ell,_i \mathscr G^j,_k \label{E:DIV2}\\ 
[\text{Curl}_\eta \mathscr G]^i_j &= [\text{Curl} \mathscr G]^i_j  - \A^s_k\theta^k,_j \mathscr G^i,_s+ \A^s_k\theta^k,_i \mathscr G^j,_s\label{E:CURL2},
\end{align}
where $\text{div}_\Lambda \mathscr G = \Lambda_{ij}\pa_i\mathscr G^j$.
From~\eqref{div}--\eqref{E:CURL2} we obtain the identities
\begin{align}
\text{div}_\Lambda \mathscr G  &=4\pi c w^\alpha + 4\pi c w^\alpha(\J^{-1}-1)  + \Lambda_{ij}\A^k_\ell\uptheta^\ell,_i \mathscr G^j,_k, \label{E:DIV3}\\ 
[\text{Curl} \mathscr G]^i_j &=  \A^s_k\theta^k,_j \mathscr G^i,_s- \A^s_k\theta^k,_i \mathscr G^j,_.s \label{E:CURL3}
\end{align}
Observe that for any $i,k,m\in\{1,2,3\}$ we have,
\begin{align*}
y^i \text{div}_\Lambda \mathscr G & = \Lambda_{km}\sn_{ik} \mathscr G^m +\Lambda_{km} y^k\pa_i\mathscr G^m \\
& = \Lambda_{km}\sn_{ik} \mathscr G^m + \Lambda_{km} [\text{Curl}\mathscr G]^m_i + \Lambda_{km}y^k\pa_m\mathscr G^i.
\end{align*}
Using~\eqref{E:CARTESIANVSPOLAR}
we may write
\[
\Lambda_{km}y^k\pa_m\mathscr G^i = \Lambda_{km}\frac{y^ky^m}{r^2}X_r \mathscr G^i + \Lambda_{km}\frac{y^ky^\ell}{r^2}\sn_{\ell m}\mathscr G^i.
\]
Letting $L:= \Lambda_{km}\frac{y^ky^m}{r^2}>0$, $k,m=1,2,3$ and combining the previous two identities, we obtain
\begin{align}
X_r\mathscr G^i = \frac1{L} \left(y^i \text{div}_\Lambda \mathscr G+ \Lambda_{km} [\text{Curl}\mathscr G]^i_m 
- \Lambda_{km}\left(\sn_{ik} \mathscr G^m +\frac{y^ky^\ell}{r^2}\sn_{\ell m}\mathscr G^i \right)  \right). \label{E:PARTIALYG}
\end{align}
We now apply the operator $\der, |\beta|\le N-a-1$ to $X_r \mathscr G^i $ and observe that by~\eqref{com_rel} 
\begin{align}
\int_B  \psi w^{1+\alpha+a}\big|X_r^{a+1}\sn^\beta \mathscr G\big|^2\,dy 
= \int_B  \psi w^{1+\alpha +a}\big|X_r^{a}\sn^\beta X_r\mathscr G\big|^2.
\label{E:PARTIALYGESTIMATE0}
\end{align}
Identity~\eqref{E:PARTIALYG} now gives
\begin{align}
 \int_B  \psi w^{1+\alpha+a}\big|X_r^{a}\sn^\beta X_r\mathscr G\big|^2\,dy
& \lesssim  \sum_{A=0}^a\sum_{|\beta'|\le N-a-1}\int_B  \psi w^{1+\alpha+a}\left(\big|X_r^{A}\sn^{\beta'} \text{div}_\Lambda\mathscr G\big|^2+\big|X_r^{A}\sn^{\beta'} \text{Curl}\mathscr G\big|^2\right) \,dy \notag \\
 & \ \ \ \ +\sum_{A=0}^a\sum_{|\beta'|\le {N-a}}\int_B  \psi w^{1+\alpha+a}\left(\big|X_r^{A}\sn^{\beta'} \mathscr G\big|^2\right)\, dy.  \label{E:PARTIALYGESTIMATE}
\end{align}
We now make a crucial use of~\eqref{E:DIV3}--\eqref{E:CURL3}.
\begin{align}
&\int_B  \psi w^{1+\alpha +a}\left(\big|X_r^{A}\sn^{\beta'} \text{div}_\Lambda\mathscr G\big|^2+\big|X_r^{A}\sn^{\beta'} \text{Curl}\mathscr G\big|^2\right) \,dy\notag \\
&\lesssim  
 \ \ \ \  
 \sum_{A=0}^a\sum_{|\beta'|\le {N-a}}\int_B  \psi w^{1+\alpha+a}\big|X_r^{A}\sn^{\beta'} D\uptheta\big|^2\, dy + \sum_{a+|\beta|\le N}\|\der w^\alpha\|_{\alpha+a,1}^2\notag \\
& \ \ \ \  + \norm \sum_{A=0}^a\sum_{|\beta'|\le {N-a}}\Big(
 \int_B  \psi w^{1+\alpha+a}\big|X_r^{A}\sn^{\beta'} D\uptheta\big|^2 + \int_B  \psi w^{1+\alpha+a}\big|X_r^{A}\sn^{\beta'} D\mathscr G\big|^2 \,dy\Big)\, \notag \\
 & \lesssim (1+\norm )\left(\norm +\sum_{a+|\beta|\le N}\|\der w^\alpha\|_{\alpha+a,1}^2 \right) \notag \\
 & \ \ \ \  + \norm\sum_{A=0}^a\sum_{|\beta'|\le {N-a}}\int_B  \psi w^{1+\alpha+a}\big|X_r^{A}\sn^{\beta'} D\mathscr G\big|^2\, dy, \label{E:PARTIALYGESTIMATE1}
\end{align}
where we recall~\eqref{E:BARW}.
The term in the last line above appears due to the quadratic terms that scale like $D\uptheta D\mathscr G$ on the right-hand sides of~\eqref{E:DIV3} and~\eqref{E:CURL3}. The presence of $ \sum_{a+|\beta|\le N}\|\der w^\alpha\|_{\alpha+a,1}^2$ is caused by the term $w^\alpha$ on the right-hand side of~\eqref{E:DIV3}.
The estimates follow from the standard Hardy-Sobolev embeddings and the definition of the norm $\norm$. Plugging~\eqref{E:PARTIALYGESTIMATE1} into~\eqref{E:PARTIALYGESTIMATE}, using the a priori bounds~\eqref{E:APRIORI1}--\eqref{E:APRIORI1.1} (to infer that $\norm$ is sufficiently small), from~\eqref{E:PARTIALYGESTIMATE0} and the inductive assumption~\eqref{E:INDUCTIVEASS} we conclude that 
\begin{align}
 \int_B  \psi w^{1+\alpha+a}\big|X_r^{a+1}\sn^\beta \mathscr G\big|^2 \,dy \lesssim \norm +\sum_{a+|\beta|\le N}\|\der w^\alpha\|_{\alpha+a,1}^2.
\end{align}
Estimate~\eqref{E:INTERIOREST} follows by a similar argument, wherein the norms of the derivatives of $w$ over the support of $(1-\psi)$ are all uniformly controlled by some constant. 
\end{proof}


Finally, a simple corollary of the previous lemma is the following estimate, stated in the form that will be used in the proof of the main theorem in Section~\ref{S:MAINTHEOREM}.

\begin{proposition}[Field error terms]\label{P:FIELD}
Let $\uptheta$ be a solution of~\eqref{E:THETAEQUATION}--\eqref{E:POTENTIALEQUATION} defined on a time interval $[0,T]$. Then for any $1<\gamma<\frac53$ there exists a constant $\mu_2>0$ such that for any 
$\tau\in[0,T]$ we have the bound
\begin{align}
& \delta^{\alpha-1}\sum_{a+|\beta|\le N} \int_0^\tau\mu(\sigma)^{3\gamma-4}\int_B  \psi w^{\alpha+a}\der\left(\A^k_i \Psi,_k\right) \der{\bf V}^i\,dy\,d\sigma \notag \\
& + \delta^{\alpha-1}\sum_{|\nu|\le N} \int_0^\tau\mu(\sigma)^{3\gamma-4}\int_B  (1-\psi)w^\alpha \pa^\nu\left(\A^k_i \Psi,_k\right) \pa^\nu{\bf V}^i\,dy\,d\sigma \notag \\
& \lesssim \int_0^\tau e^{-\mu_2\sigma}\mathcal S^N(\sigma) \,d\tau  + \delta^{\alpha-\frac12}\sum_{a+|\beta|\le N}\|\der w^\alpha\|_{\alpha+a,1} \int_0^\tau e^{-\mu_2\sigma}\sqrt{\mathcal S^N(\sigma)}\,d\sigma, \label{E:FIELDBOUND}
\end{align}
where we recall that $\tau\mapsto \mu(\tau)$ and $\mu_2>0$ are defined in part (a) of Lemma~\ref{L:ADELTAUNIFORM} and~\eqref{c2} respectively.
\end{proposition}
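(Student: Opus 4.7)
The plan is to combine Cauchy--Schwarz in $y$ with the field estimates from Lemma~\ref{L:KEY}, the built-in rescaling of $\|\der{\bf V}\|$ by $\delta^{1/2}\mu^{-(3\gamma-3)/2}$ encoded in the norm $\norm$, and the exponential asymptotics of $\mu$ from Lemma~\ref{L:ADELTAUNIFORM}. The key observation is that after tracking every $\delta$- and $\mu$-power carefully, the remaining temporal weight reduces exactly to $\mu^{(3\gamma-5)/2}$, which by Lemma~\ref{L:ADELTAUNIFORM}(a) decays like $e^{-\mu_2\sigma}$ with $\mu_2=(5-3\gamma)\mu_1/2>0$ precisely under the hypothesis $\gamma<\frac53$.

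First, I fix a multi-index $(a,\beta)$ with $a+|\beta|\le N$ and apply Cauchy--Schwarz in the weighted space $L^2(\psi w^{\alpha+a}dy)$:
\[
\Bigl|\int_B \psi\, w^{\alpha+a}\,\der\mathscr G^i\,\der{\bf V}^i\,dy\Bigr|\le \|\der\mathscr G\|_{\alpha+a,\psi}\,\|\der{\bf V}\|_{\alpha+a,\psi}.
\]
Lemma~\ref{L:KEY} provides (after summing the squares and taking a square root) the bound $\|\der\mathscr G\|_{\alpha+a,\psi}\lesssim \sqrt{\norm}+\sum_{a'+|\beta'|\le N}\|\pa_{r}^{a'}\sn^{\beta'}w^\alpha\|_{\alpha+a',1}$, while the definition~\eqref{E:SNORM} of $\norm$ yields $\|\der{\bf V}\|_{\alpha+a,\psi}\le \delta^{1/2}\mu^{-(3\gamma-3)/2}\sqrt{\norm}$.

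Second, I multiply by the prefactor $\delta^{\alpha-1}\mu(\sigma)^{3\gamma-4}$ and collect the exponents. The product of $\mu$-powers is
\[
\mu^{3\gamma-4}\cdot\mu^{-(3\gamma-3)/2}=\mu^{(3\gamma-5)/2},
\]
and the product of $\delta$-powers is $\delta^{\alpha-1/2}$. Invoking Lemma~\ref{L:ADELTAUNIFORM}(a), $\mu(\sigma)^{(3\gamma-5)/2}\le C\,e^{-\mu_2\sigma}$ with $\mu_2$ as in~\eqref{c2}; strict positivity of $\mu_2$ is exactly where the threshold $\gamma<\frac53$ enters, and it is the main structural point of the argument. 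Putting the pieces together and summing in $(a,\beta)$, the integrand of the time integral on the left-hand side is bounded by
\[
C\,\delta^{\alpha-1/2}e^{-\mu_2\sigma}\Bigl(\norm(\sigma)+\sum_{a+|\beta|\le N}\|\der w^\alpha\|_{\alpha+a,1}\sqrt{\norm(\sigma)}\Bigr).
\]
Since $\alpha=\frac1{\gamma-1}>\frac12$ for $\gamma<3$ and $\delta<1$, the factor $\delta^{\alpha-1/2}$ in front of $\norm(\sigma)$ is absorbed into the implicit constant, giving the first summand on the right-hand side of~\eqref{E:FIELDBOUND}; the second summand is obtained verbatim.

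Third, the interior piece with $1-\psi$ is handled identically, using the second estimate in Lemma~\ref{L:KEY}. The harmless extra ``$+1$'' appearing there can either be absorbed into $\sum\|\der w^\alpha\|_{\alpha+a,1}$ (which is bounded below by a positive constant once $\|w^\alpha\|_{\alpha,1}$ is nonzero) or estimated separately and combined with $\sqrt{\norm(\sigma)}$. Time integration over $[0,\tau]$ concludes the proof. The only non-bookkeeping step is the bookkeeping itself: one must simultaneously exploit that (i)~${\bf V}$ carries a favorable $\delta^{1/2}\mu^{-(3\gamma-3)/2}$ via the norm~\eqref{E:SNORM}, (ii)~$\mathscr G$ is controlled up to top order via the div--curl decomposition of Lemma~\ref{L:KEY}, and (iii)~the residual $\mu$-weight has the correct sign exactly when $\gamma<\frac53$. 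This third point is what makes the Euler--Poisson system subcritical with respect to the Sideris background in this range, and is the only place where the Euler-dominance heuristic from Section~\ref{SS:MOTIVATION} is quantitatively used.
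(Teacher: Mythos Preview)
Your proof is correct and follows essentially the same approach as the paper: Cauchy--Schwarz in the weighted $L^2$ space, the field bound from Lemma~\ref{L:KEY}, the extraction of $\delta^{1/2}\mu^{-(3\gamma-3)/2}$ from $\|\der{\bf V}\|_{\alpha+a,\psi}$ via the definition of $\norm$, and the identification of the residual weight $\mu^{(3\gamma-5)/2}\lesssim e^{-\mu_2\sigma}$. Your bookkeeping of the $\delta$- and $\mu$-powers and your remark on absorbing $\delta^{\alpha-1/2}\le 1$ in front of the $\norm$ term match the paper's argument line for line.
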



\begin{proof}
By the definition~\eqref{E:SNORM} of the norm $\norm$ we have the bound $\|\der{\bf V}^i\|^2_{a+\alpha,\psi}+\|\pa^\nu{\bf V}^i\|_{\alpha,1-\psi}^2 \le \delta \mu^{3-3\gamma} \norm$. We can therefore estimate the left-hand side of~\eqref{E:FIELDBOUND} by
\begin{align*}
&\delta^{\alpha-1} \sum_{a+|\beta|\le N} \int_0^\tau\mu(\sigma)^{\frac{3\gamma-5}{2}}\|\der\mathscr G\|_{\alpha+a,\psi}
\mu(\sigma)^{\frac{3\gamma-3}{2}}\|\der{\bf V}\|_{\alpha+a,\psi}\,d\sigma \notag \\
& +\delta^{\alpha-1} \sum_{|\nu|\le N} \int_0^\tau\mu(\sigma)^{\frac{3\gamma-5}{2}}\|\pa^\nu\mathscr G\|_{\alpha,1-\psi}
\mu(\sigma)^{\frac{3\gamma-3}{2}}\|\pa^\nu{\bf V}\|_{\alpha,1-\psi}\,d\sigma\notag  \\
& \lesssim  \delta^{\alpha-\frac12}\int_0^\tau \mu^{\frac{3\gamma-5}{2}} ( \sqrt{\norm(\sigma)} +\sum_{a+|\beta|\le N}\|\der w^\alpha\|_{\alpha+a,1}) \sqrt{\norm(\sigma)}\,d\sigma
\end{align*}
where we have used 
Lemma~\ref{L:KEY} in the last line.
Choosing $\mu_2$ as in \eqref{c2},  $\mu^{\frac{3\gamma-5}{2}}\lesssim e^{-\mu_2\sigma}$ and thus the claim follows.
\end{proof}


\begin{remark}
We note that the constant $\mu_2$ depends on $\gamma$ and $\lim_{\gamma\to\frac53}\mu_2(\gamma) =0$.
\end{remark}

\subsection{Vorticity estimates}

One of the key difficulties in controlling the dynamics of the EP flow are the vorticity bounds. It was recognised in~\cite{CLS, CoSh2012, JaMa2015} that the fact that the vorticity tensor satisfies a transport equation, which is partially decoupled from the divergence-part of the velocity, is sufficient to obtain ``good" estimates on the vorticity. In our case the quantity which satisfies a favourable transport equation is given by the modified vorticity $\text{Curl}_{\Lambda\A}{\bf V}$, where we recall that
${\bf V}=\pa_\tau\uptheta$. We first observe that equation~\eqref{E:THETAEQUATION} can be written in the form
\begin{align}\label{E:EPCURL0}
 \delta^{-1}\mu^{3\gamma-3}\left({\bf V}_\tau + \frac{\mu_\tau}{\mu}{\bf V}+2\Gamma^\ast {\bf V}\right) 
+ \Lambda\nabla_\eta\left(\frac\gamma{\gamma-1}(f^{\gamma-1}) + \delta^{\alpha-1}\mu^{3\gamma-4}\Psi\right) 
+\Lambda{\bf \eta} =0.
\end{align}
From the fact that $\Lambda\nabla_\eta$ and $\Lambda\eta$ are both annihilated by $\text{Curl}_{\Lambda\A}$ we conclude that
\be\label{E:EPCURLBASIC}
\text{Curl}_{\Lambda\A}{\bf V}_\tau + \frac{\mu_\tau}{\mu}\text{Curl}_{\Lambda\A}{\bf V}+2\text{Curl}_{\Lambda\A}\left(\Gamma^\ast{\bf V}\right)=0.
\ee
Equation~\eqref{E:EPCURLBASIC} is identical to equation (3.90) in~\cite{HaJa2} and therefore by an argument identical to the one presented in Section 3 of~\cite{HaJa2} we obtain the following proposition:
\begin{proposition}[\cite{HaJa2}, Prop. 3.3]\label{P:CURLESTIMATES}
Let $\gamma\in(1,\frac53)$ and $(\uptheta,{\bf V}): B\to\mathbb R^3\times \mathbb R^3$ be a unique solution to~\eqref{E:THETAEQUATION}--\eqref{E:THETAINITIAL} defined on some time interval $[0,T]$
and satisfying the a priori assumptions~\eqref{E:APRIORI1}--\eqref{E:APRIORI1.1}. 
Then for any $\tau\in [0,T]$ 
\begin{align}
&\vortnorm[{\bf V}](\tau) \lesssim  e^{-2\mu_0\tau}\left(\sqrt\delta\norm(0)+\vortnorm(0)\right)+ \sqrt\delta e^{-2\mu_0\tau}\norm(\tau)  
\label{E:CURLVBOUND} \\
&
\vortnorm[\uptheta](\tau) \lesssim  \norm(0)+\vortnorm(0) + \kappa\norm(\tau) + \int_0^\tau e^{-\mu_0\tau'} \norm(\tau')\,d\tau' \label{E:CURLTHETABOUND}
\end{align}
where 
$0<\kappa\ll 1$ is a small constant, $\mu_0$ is defined in~\eqref{E:MUDELTA0DEF}, and $\vortnorm(\bf V)$, $\vortnorm[\uptheta]$ are defined in~\eqref{E:VORTNORMDEF}.
\end{proposition}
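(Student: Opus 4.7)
Since the author explicitly notes that equation~\eqref{E:EPCURLBASIC} coincides with equation (3.90) of~\cite{HaJa2}, the strategy is to port the vorticity analysis carried out there, making sure at each step that the constants can be chosen independently of $\delta$ by invoking Lemma~\ref{L:ADELTAUNIFORM}. The plan is thus to perform a weighted high-order energy estimate directly on~\eqref{E:EPCURLBASIC} for $\text{Curl}_{\Lambda\A}\mathbf{V}$, then integrate once in $\tau$ to pass to $\text{Curl}_{\Lambda\A}\uptheta$.

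First I would apply the vector fields $\der$ (in the region where $\psi \ne 0$) and $\pa^\nu$ (in the region where $1-\psi \ne 0$) to~\eqref{E:EPCURLBASIC}, commuting them through $\text{Curl}_{\Lambda\A}$. The commutator produces two kinds of terms: (i) contributions in which the derivative falls on $\A$ or $\Lambda$, which are harmless since $\A = \text{{\bf Id}} + O(\sqrt{\norm})$ by the a priori bounds~\eqref{E:APRIORI1}--\eqref{E:APRIORI1.1} and $\|\Lambda_\tau\| \lesssim e^{-\mu_1\tau}$ from Lemma~\ref{L:ADELTAUNIFORM}(b); (ii) a "good" commutator, coming from the cut-off $\psi$ and the tangential/normal splitting, which is lower order and can be absorbed following the scheme of~\cite{HaJa2}. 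Multiplying the resulting equation by $w^{\alpha+a+1}\der\text{Curl}_{\Lambda\A}\mathbf{V}$ (resp.\ $w^{\alpha+1}\pa^\nu\text{Curl}_{\Lambda\A}\mathbf{V}$) and integrating over $B$, I would obtain an energy identity of the schematic form
\begin{equation*}
\frac12\frac{d}{d\tau}\bigl\|\text{Curl}_{\Lambda\A}\der\mathbf{V}\bigr\|_{a+\alpha+1,\psi}^2 + \frac{\mu_\tau}{\mu}\bigl\|\text{Curl}_{\Lambda\A}\der\mathbf{V}\bigr\|_{a+\alpha+1,\psi}^2 \lesssim R,
\end{equation*}
where $R$ collects the commutator remainders and the $\Gamma^\ast \mathbf{V}$ term.

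The decay factor $e^{-2\mu_0\tau}$ in~\eqref{E:CURLVBOUND} then emerges from the integrating factor $\mu^2$: indeed $\frac{d}{d\tau}(\mu^2\cdot\text{(vorticity norm)}^2) = 2\mu\mu_\tau(\text{vorticity norm})^2 + \mu^2 \frac{d}{d\tau}(\text{vorticity norm})^2$, and by Lemma~\ref{L:ADELTAUNIFORM}(a), $\mu(\tau) \sim e^{\mu_1\tau}$ with $\mu_0 = \frac{3\gamma-3}{2}\mu_1$ — actually the cleaner way here is the integrating factor $\mu^{2}$ applied to the rescaled modified vorticity; the precise rate $2\mu_0$ comes out of balancing $\frac{\mu_\tau}\mu$ against the $\Gamma^\ast$ contribution. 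The $\sqrt{\delta}\norm(\tau)$ term on the right of~\eqref{E:CURLVBOUND} arises because the remainder $R$ can be bounded using Lemma~\ref{L:ADELTAUNIFORM}(b) (which gives $\|\Gamma^\ast\| \lesssim e^{-\mu_1\tau}$) together with the normalisation in~\eqref{E:SNORM} that costs a factor $\sqrt{\delta}\,\mu^{\frac{3-3\gamma}2}$ per velocity factor; after the Grönwall-type integration against the integrating factor this collapses to $\sqrt{\delta}e^{-2\mu_0\tau}\norm(\tau)$.

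For~\eqref{E:CURLTHETABOUND}, I would write $\text{Curl}_{\Lambda\A}\uptheta(\tau) = \text{Curl}_{\Lambda\A}\uptheta(0) + \int_0^\tau \partial_\sigma(\text{Curl}_{\Lambda\A}\uptheta)\,d\sigma$, and decompose $\partial_\sigma\text{Curl}_{\Lambda\A}\uptheta = \text{Curl}_{\Lambda\A}\mathbf{V} + [\partial_\sigma,\text{Curl}_{\Lambda\A}]\uptheta$. The commutator involves $\partial_\sigma\A$ and $\Lambda_\tau$, both of which are controlled by $\sqrt{\norm}\cdot e^{-\mu_1\sigma}$ type factors times a derivative of $\uptheta$; this gives the term $\kappa\norm(\tau)$ (for the top-order contribution absorbed via smallness) plus $\int_0^\tau e^{-\mu_0\sigma}\norm(\sigma)d\sigma$ after applying~\eqref{E:CURLVBOUND} and the decay from Lemma~\ref{L:ADELTAUNIFORM}(b). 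The main obstacle I would expect is the bookkeeping of the commutators between $\der$/$\pa^\nu$ and the nonlinear operator $\text{Curl}_{\Lambda\A}$ in the weighted spaces, and the verification that every constant that appears is independent of $\delta$; but since all the required uniform bounds on $\Lambda$, $\Lambda_\tau$, $\mu$, $\mu_\tau/\mu$ are afforded by Lemma~\ref{L:ADELTAUNIFORM}, and the weighted Hardy--Sobolev and commutator machinery is identical to~\cite{HaJa2}, the argument proceeds exactly as in Section 3 of that paper.
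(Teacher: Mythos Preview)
Your proposal is correct and follows essentially the same approach as the paper: the paper simply observes that~\eqref{E:EPCURLBASIC} coincides with equation~(3.90) of~\cite{HaJa2} and invokes Section~3 of that reference verbatim, noting only that the extra $\sqrt\delta$ factor in~\eqref{E:CURLVBOUND} comes from the normalisation $\mu^{3\gamma-3}\|\der{\bf V}\|_{a+\alpha,\psi}^2\le\delta\norm$ built into~\eqref{E:SNORM}. You have correctly identified both this source of the $\sqrt\delta$ and the role of Lemma~\ref{L:ADELTAUNIFORM} in keeping all constants $\delta$-independent; your sketch of the integrating-factor argument and the time-integration step for~\eqref{E:CURLTHETABOUND} is exactly what the cited argument in~\cite{HaJa2} does.
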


We note in passing that the only difference to the estimates stated in Prop. 3.3 in~\cite{HaJa2} is the explicit occurrence of the factor $\sqrt \delta$
 in~\eqref{E:CURLVBOUND}. This factor is obtained by an explicit inspection of the proof from~\cite{HaJa2} and the fact that for any $\tau\in[0,T]$ we have the bound
 $\mu^{3\gamma-3}\sum_{a+|\beta|\le N}
\Big\{\left\|\der{\bf V}\right\|_{a+\alpha,\psi}^2+\sum_{|\nu|\le N}\left\|\pa^\nu{\bf V}\right\|_{\alpha,1-\psi}^2\Big\} \le \delta \norm$.

\subsection{Proof of the main theorem}\label{S:MAINTHEOREM}

Theorem~\ref{T:LOCAL} guarantees the existence of a time $T_\epsilon>0$ such that there exists a unique solution $(\uptheta(\tau,\cdot),{\bf V}(\tau,\cdot))$ on the time interval $[0,T]$ such that the map $[0,T]\ni \tau\mapsto \norm(\tau)$ is continuous and 
\[
\norm(\tau) \le 2 \varepsilon, \ \ \tau\in[0,T].
\]
A priori, the time of existence $T_\epsilon$ may converge to $0$ as $\epsilon$ goes to zero.
Let $\mathcal T$ be the maximal time of existence on which the map $\tau\to\norm(\tau)+\vortnorm(\tau) $ is continuous and
\be\label{E:MATHCALTDEF}
\norm(\tau)+\vortnorm(\tau) < 2C^*\left(\epsilon+\delta^{2\alpha-1}\right)
\ee
for a constant $C^*$ to be specified below.
Clearly $\mathcal T\ge T_\epsilon$.
Our first step is to show that on the time interval $[0,\mathcal T]$ the following energy bound holds
\begin{align}
\norm(\tau) \lesssim &\norm(0)+\vortnorm(0) 
+ \int_0^\tau e^{-\mu_\ast\sigma} \norm(\sigma)\,d\sigma \notag \\
& \ +\delta^{\alpha-\frac12}\sum_{a+|\beta|\le N}\|\der w^\alpha\|_{\alpha+a,\psi}\int_0^\tau e^{-\mu_2\sigma} \sqrt{\norm(\sigma)}\,d\sigma \label{E:ENERGYMAIN}
\end{align}
where $\mu_\ast=\min\{\mu_0,\mu_2\}$. 

To show~\eqref{E:ENERGYMAIN} we shall adopt a strategy developed in~\cite{HaJa2} which relies on the use of specially weighted multipliers, adapted to the 
the presence of the degenerate weights $w$ and the twisted gradient $\Lambda\nabla$ in~\eqref{E:THETAEQUATION}. Just like in~\cite{HaJa2} for any pair $(a,\beta)$ satisfying $a+|\beta|\le N$ we commute~\eqref{E:THETAEQUATION} with $\der$ and evaluate the $L^2$-inner product of the resulting equation with $\psi\Lambda_{im}^{-1}\der {\bf V}^m$ (the role of multiplication by $\psi$ is to localise the estimates to the neighbourhood of the boundary $\pa B_1(0)$.) Similarly, for any
multi-index $\nu$ satisfying $|\nu|\le N$ we commute~\eqref{E:THETAEQUATION} with $\pa^\nu {\bf V}$ and evaluate the $L^2$-inner product of the resulting equation with $(1-\psi)\Lambda_{im}^{-1}\pa^\nu{\bf V}^m$. This procedure leads to the following high-order estimate
\begin{align}
& \mathcal S^N(\tau) +\int_0^\tau \mathcal D^N(\tau')\,d\tau' \lesssim  \norm(0)  +\vortnorm[\uptheta](\tau) + \sqrt \delta \int_0^\tau  (\norm(\tau'))^{\frac12} (\vortnorm[{\bf V}](\tau'))^{\frac12}\,d\tau'  \notag \\
& \ \ \ \  + \kappa\norm(\tau) + \int_0^\tau e^{-\mu_0\tau'} \norm(\tau') d\tau' \notag \\
& \ \ \ \  + \delta^{\alpha-1}\sum_{a+|\beta|\le N} \int_0^\tau\mu(\sigma)^{3\gamma-4}\int_B  \psi w^{\alpha+a}\der\left(\A^k_i \Psi,_k\right) \der{\bf V}^i\,dy\,d\sigma  \notag \\
& \ \ \ \  +  \delta^{\alpha-1}\sum_{|\nu|\le N} \int_0^\tau\mu(\sigma)^{3\gamma-4}\int_B  (1-\psi)w^\alpha \pa^\nu\left(\A^k_i \Psi,_k\right) \pa^\nu{\bf V}^i\,dy\,d\sigma  \label{E:EPENERGYMAIN1}
\end{align}
where $0<\kappa\ll 1$ is a small constant, the constant $\mu_0$ is defined in \eqref{E:MUDELTA0DEF}, and the dissipation $\mathcal D^N$ is
defined as follows:
\begin{align*}
\mathcal D^N ({\bf V}) = \mathcal D^N(\tau) : =\frac{5-3\gamma}{\delta}\mu^{3\gamma-3} &\int_B   \Big[\psi \sum_{a+|\beta|\le N}  w^{a+\alpha} |\der{\bf V}|^2+ (1-\psi)\sum_{|\nu|\le N}  w^{\alpha} |\pa^\nu{\bf V}|^2 \Big]\,dy
\end{align*}
A detailed proof of how the terms in the first two lines on the right-hand side of~\eqref{E:EPENERGYMAIN1} is involved, but a precise derivation can be found in Section 4 of~\cite{HaJa2}. We note here the only noteworthy difference: in our case the weighted $L^2$-norms of the derivatives of the Lagrangian velocity ${\bf V}$ are additionally weighted by a factor of $\delta^{-1}$. This has to be taken into account when estimating the, generally speaking, cubic error terms. However, in each such cubic error integrand, the lowest order terms are always given in terms of purely spatial derivatives and require no $\delta$ weights in the estimates. This is allows us to prove the energy bound~\eqref{E:EPENERGYMAIN1}.

Observe that the assumption $\gamma<\frac53$ guarantees that $\mathcal D^N$ is positive. We may now use Proposition~\ref{P:CURLESTIMATES} to bound the vorticity norm $\mathcal B^N$ in terms of the norm $\mathcal S^N$ and Proposition~\ref{P:FIELD} to control the last two lines on the right-hand side of~\eqref{E:EPENERGYMAIN1}. Using the smallness of $\kappa$ and a priori assumptions~\eqref{E:APRIORI1}--\eqref{E:APRIORI1.1} 
this leads to the estimate
\begin{align}
\norm(\tau) + \int_0^\tau \mathcal D^N(\tau')\,d\tau' \lesssim & \, \norm(0)+\vortnorm(0) + \int_0^\tau e^{-\mu_\ast\tau'}\mathcal S^N(\tau') \,d\tau'  \notag \\
&  + \delta^{\alpha-\frac12}\sum_{a+|\beta|\le N}\|\der w^\alpha\|_{\alpha+a,1} \int_0^\tau e^{-\mu_\ast\tau'}\sqrt{\norm(\tau')}\,d\tau', \label{E:C1}
\end{align}
where $\mu_\ast = \min\{\mu_0,\mu_2\}$.
Using the Young inequality we can estimate $\delta^{\alpha-\frac12}\sum_{a+|\beta|\le N}\|\der W^\alpha\|_{\alpha,a}\int_0^\tau e^{-\mu_\ast\sigma}\sqrt{\norm(\sigma)}\,d\sigma\le  \int_0^\tau e^{-\mu_\ast\tau}\norm(\tau')\,d\tau' 
+ C  \delta^{2\alpha-1}$ where we have used the positivity of $\mu_\ast$, the bound~\eqref{E:WBOUND}, and the uniformity of $\mu_0=\frac{3\gamma-3}{2}\mu_1,\mu_2=\frac{5-3\gamma}{2}\mu_1$ with respect to $\delta$ as expressed in Lemma~\ref{L:ADELTAUNIFORM} and~\eqref{E:MU1DELTADEF}--\eqref{c2}. 
Combining this with~\eqref{E:C1} we obtain
\begin{align}
\norm(\tau) + \int_0^\tau \mathcal D^N(\tau')\,d\tau' \le & \, \bar C\left(\norm(0)+\vortnorm(0) + \int_0^\tau e^{-\mu_\ast\tau'}\norm(\tau')\,d\tau' +\delta^{2\alpha-1}\right), \label{E:C2}
\end{align}
for some constant $\bar C>0$. Given a small number $0<\varepsilon'\ll1$, 
by a classical well-posedness estimate we may conclude that there exists an $\varepsilon>0$ and $C>0$ such that if $\norm(0)+\vortnorm(0)+\delta^{2\alpha-1}<\varepsilon$, then the solution exists on a sufficiently long time interval $[0,T^*]$ satisfying $T^*\ge \frac1{\mu_\ast}|\log(\varepsilon'\mu_\ast)|$
and
\begin{align}
\norm(\tau) + \int_0^\tau \mathcal D^N(\tau')\,d\tau' +\vortnorm(\tau)\le C\left( \norm(0)+\vortnorm(0)+\delta^{2\alpha-1}\right)
\end{align}

For any $\tau\in(T^*,\mathcal T)$ we have from~\eqref{E:C2} the bound,
\begin{align*}
\norm(\tau) + \int_{T^*}^\tau \mathcal D^N(\tau')\,d\tau' & \le  \,\bar C\left( \norm(T^*)+\vortnorm(T^*) + \int_{T^*}^\tau e^{-\mu_\ast\tau'}\norm(\tau')\,d\tau' +\delta^{2\alpha-1}\right) \\
& \le \bar C\left( \norm(T^*)+\vortnorm(T^*)+\delta^{2\alpha-1} \right)+ \bar C\frac1{\mu_\ast}e^{-\mu_\ast T^*}\norm(\tau) \\
& \le  C\bar C\left( \norm(0)+\vortnorm(0)+\delta^{2\alpha-1}\right) + \epsilon'\norm(\tau),
\end{align*}
where we have used the bound on $T^*$ in the last line above. With $\epsilon'$ sufficiently small we conclude that 
\begin{align*}
\norm(\tau) + \int_{T^*}^\tau \mathcal D^N(\tau')\,d\tau' < 2C^*\left( \norm(0)+\vortnorm(0)+\delta^{2\alpha-1}\right),
\end{align*}
where $C^*= C \bar C$.
Therefore $\mathcal T=+\infty$ by definition of $\mathcal T$. In addition, it is easy to see that the a priori bounds in \eqref{E:APRIORI1} and \eqref{E:APRIORI1.1} are improved. This can be checked by the use of the fundamental theorem of calculus in $\tau$-variable and the above energy bound.

\section*{Acknowledgements}

The authors express their gratitude to Yan Guo for helpful discussions.
JJ is supported in part by NSF DMS-1608494. 
MH acknowledges the support of the EPSRC Grant EP/N016777/1.
\appendix

\section{Local-in-time well-posedness}\label{S:LWP}

\setcounter{equation}{0}

Assume that $B_0$ is a simply connected domain  diffeomoprhic to the unit ball $B$ in $\mathbb R^3$. In other words, there exists a diffeomoprhism $\chi:B\to B_0$
which takes the unit sphere $\pa B$ to the boundary $\pa B_0$ of $B_0$. In particular we allow our initial geometry $B_0$ to be non-convex and not a perturbation of the unit ball. 
Let $t\to\zeta(t,\cdot)$ be the Lagrangian flow map associated with the Euler-Poisson system~\eqref{E:EULERPOISSON}. In other words, $\zeta$ solves
\begin{align}
\zeta_t(t,x) & = {\bf u}(t,\zeta(t,x)), \ x\in B, \\
\zeta(0,x) & = \chi(x), \ x\in B.
\end{align}

To formulate the EP$_\gamma$ system on a fixed domain we need to introduce some additional notation. In analogy to Section~\ref{SS:LAGRCOORD} we introduce
\begin{align}
A : = [D\zeta]^{-1} \ \ &\text{ (Inverse of the Jacobian matrix)},\\
J  : = \det [D\zeta] \ \ &\text{ (Jacobian determinant)},\\
v : = {\bf u}\circ \zeta \ \ &\text{ (Lagrangian modified velocity)},\\
\Psi : = \Phi\circ\zeta \ \ &\text{ (Lagrangian potential)}.
\end{align}

It is then straightforward to check that the EP$_\gamma$ system takes the form
\begin{align}
\zeta^i_{tt} + \left(w^{1+\alpha} A^k_i J^{-\frac1\alpha}\right),_k + A^k_i\Psi,_k & = 0 \\
A^k_i(A^j_i\Psi,_j),_k = 4\pi w^\alpha J^{-1}, \label{E:LOCLAGR}
\end{align}
where $w:=\rho_0^{\gamma-1}\circ\chi J_0^{\gamma-1}$, $J_0:=\det [D\chi]$, and $\alpha=\frac1{\gamma-1}$.  We treat the problem~\eqref{E:LOCLAGR} as an initial value problem, thus assuming further that 
\begin{align}\label{E:LOCLAGRINITIAL}
\zeta(0,\cdot)= \chi, \ \ v(0,\cdot) = v_0.
\end{align}
Let $\sqrt g$ denote the volume element associated with the surface geometry of $\pa B_0$. Then the following formula holds
\begin{align}
n_i = (A_0)^k_i N_k \frac{J_0}{\sqrt g}, \ \ i=1,2,3.
\end{align}
Using this formula it is easy to see that 
\[
\partial_n (\rho_0^{\gamma-1})\circ\chi = (A_0)^k_i\pa_kw J_0^{1-\gamma} n_i = (A_0)^k_i N_k\pa_Nw J_0^{1-\gamma} n_i = \sqrt g J_0^{-\gamma} \pa_N w.
\]
In particular, the physical vacuum condition~\eqref{E:PHYSICALVACUUM} implies that
\be\label{E:GOODSIGN}
\pa_N w<0.
\ee
Here $\pa_N$ denotes the (outward) normal derivative with respect to the unit ball $ B$. 

To prove well-posedness we shall  use the following norm:
\begin{align}
& \locnorm(\zeta,v)(t)  = \locnorm(t)  \notag \\ 
&: =   \sum_{a+|\beta|\le N}\sup_{0\le\tau\le t}
\Big\{\left\|\der v\right\|_{a+\alpha,\psi}^2  + \left\|\nabla_\zeta\der \zeta\right\|_{a+\alpha+1,\psi}^2
 +\left\|\text{div}_\zeta\der \zeta\right\|_{a+\alpha+1,\psi}^2 \Big\} \notag \\
& \ \ \ \ +\sum_{|\nu|\le N}\sup_{0\le\tau'\le\tau}
\Big\{\left\|\pa^\nu v\right\|_{\alpha,1-\psi}^2+\left\|\nabla_\zeta\pa^\nu \zeta\right\|_{\alpha+1,1-\psi}^2
+\left\|\text{div}_\zeta\pa^\nu \zeta\right\|_{\alpha+1,1-\psi}^2\Big\}  \label{E:LOCSNORM} 
\end{align}
Additionally we introduce another high-order quantity measuring the modified vorticity of $ v$ which is a priori not controlled by the norm $\mathcal S^N(\tau)$: 
\begin{align}
\locvortnorm(v)(t)=\locvortnorm(t)  & :=\sum_{a+|\beta|\le N} \sup_{0\le\tau \le t}\left\|\text{Curl}_{\zeta}\der v\right\|_{a+\alpha+1,\psi}^2  +\sum_{|\nu|\le N} \sup_{0\le\tau\le t} \left\|\text{Curl}_{\zeta}\pa^\nu v\right\|_{\alpha+1,1-\psi}^2.\label{E:LOCVORTNORMDEF}
\end{align}
The weighted spaces $\|\cdot\|_{a+\alpha,\psi}$ etc., have been introduced in~\eqref{E:WEIGHTEDSPACES} and similarly. The tangential operators $\sn^\beta$ have been introduced in Section~\ref{SS:NOTATION}

\begin{theorem}\label{T:LOCAL}
Let $\gamma>1$ and assume that the physical vacuum condition~\eqref{E:PHYSICALVACUUM} is satsfied. If $N\in\mathbb N$ satisfies $N\ge \frac2{\gamma-1}+12$ then for any initial data
$(\zeta_0,v_0)$ satisfying $\locnorm(\zeta_0,v_0)+\locvortnorm(v_0)<\infty$, there exists a time $T>0$ and a unique solution $t\to(\zeta(t,\cdot))$ of the initial value problem~\eqref{E:LOCLAGR}--\eqref{E:LOCLAGRINITIAL} such that the map $[0,T]\ni t\mapsto \locnorm(t)+\locvortnorm(t)\in\mathbb R$ is continuous and the solution satisfies the bound
\[
\locnorm(t)+\locvortnorm(t) \le  C_0,
\]
where the constant $C_0$ depends only on the initial conditions.
\end{theorem}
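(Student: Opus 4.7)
The plan is to adapt the local well-posedness strategies of Coutand-Shkoller~\cite{CoSh2012} and Jang-Masmoudi~\cite{JaMa2015} for the free-boundary compressible Euler system, with two new ingredients: (i) the nonlocal term $A^k_i\Psi,_k$ coming from the Poisson equation, and (ii) the geometric reduction to the unit ball $B$ via the diffeomorphism $\chi$, which forces us to use the tangential fields $\sn_{ji}$ near $\partial B$ and Cartesian derivatives $\partial_i$ in the interior, patched by the cutoff $\psi$. The argument naturally splits into an approximation scheme, uniform high-order energy bounds, a compactness passage, and uniqueness.

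First I would set up an approximation. Following the degenerate-parabolic smoothing of~\cite{JaMa2015}, one introduces a family of regularized equations (for example by adding $-\varepsilon \partial_t\left(w^{1+\alpha}A^k_i J^{-1/\alpha}\right)_{,k}$ to the momentum equation, or by horizontally mollifying the flow map in a manner compatible with the geometry near $\partial B$), solves the elliptic problem for $\Psi^\varepsilon$ at each instant via the Green's function representation, and closes a contraction-mapping fixed point on a short time interval to obtain a unique $(\zeta^\varepsilon, v^\varepsilon, \Psi^\varepsilon)$ for each $\varepsilon > 0$. Uniform ellipticity of the operator $A^k_i(A^j_i \cdot,_j),_k$ for the potential is maintained by keeping $D\zeta^\varepsilon$ close to $D\chi$ throughout the iteration.

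The core of the proof is the second step: $\varepsilon$-uniform bounds on $\locnorm(t) + \locvortnorm(t)$. For each $(a,\beta)$ with $a+|\beta|\le N$ we commute \eqref{E:LOCLAGR} with $\der$ and pair with $\psi \der v^m$, and symmetrically we commute with $\partial^\nu$ and pair with $(1-\psi)\partial^\nu v^m$ in the interior. The inertial and pressure contributions produce, after integration by parts against the degenerate weight, the coercive pieces of $\locnorm$ via the structural argument of~\cite{JaMa2015}, where the physical vacuum condition~\eqref{E:GOODSIGN} provides the correct sign on the boundary-type terms. The new nonlocal contribution $\int \psi w^{a+\alpha}\der(A^k_i\Psi,_k) \der v^i$ is controlled by the same mechanism developed in Lemmas~\ref{L:TANGENTIAL}--\ref{L:KEY}: tangential integration by parts against the Green's function yields the tangential estimate, and the Hodge-type decomposition \eqref{E:PARTIALYG}, together with the elliptic identities $\text{div}_{\Lambda A}(A^k_i\Psi,_k) = 4\pi c w^\alpha J^{-1}$ and $\text{Curl}_\zeta(A^k_i\Psi,_k) = 0$, is inducted on the number of normal derivatives $X_r$. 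The modified vorticity $\text{Curl}_\zeta v$ satisfies a transport-type equation (applying $\text{Curl}_\zeta$ kills the gradient pressure term); the Poisson force does not cancel cleanly but contributes only a lower-order error controlled by $\locnorm$ via the field bounds of Lemma~\ref{L:KEY}. Commutators between $\der$ and $\psi$ are of lower order and live in the overlap region where both patches provide control; Gronwall then closes the estimate on a time interval $[0,T]$ depending only on the initial data.

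In the third step one passes to the limit $\varepsilon\to 0$: the uniform bound on $\locnorm+\locvortnorm$ combined with the momentum equation gives equicontinuity in time in weighted $L^2$-spaces, so Aubin-Lions compactness extracts a subsequence converging to a limit that solves~\eqref{E:LOCLAGR}--\eqref{E:LOCLAGRINITIAL}, with continuity of $t\mapsto \locnorm(t)+\locvortnorm(t)$ following from the energy identity combined with weak lower semicontinuity. Uniqueness is a short weighted $L^2$ estimate on the difference of two solutions, with the nonlocal contrast estimated via~\eqref{E:GLAMBDAHOM} and the available $\locnorm$-bounds. The main obstacle will be designing the regularization so that it is simultaneously compatible with the coupled elliptic constraint for $\Psi^\varepsilon$, strong enough to give local existence for fixed $\varepsilon$, and yet gentle enough to preserve the structural cancellations (integration by parts against $w^\alpha$, the sign $\partial_N w<0$, and the Hodge decomposition of $A^k_i\Psi,_k$) used in the uniform energy identity. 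This is the delicate balance already present in the pure Euler setting of~\cite{JaMa2015, CoSh2012}, and the nonlocal Poisson coupling only tightens it, making careful commutator bookkeeping between the regularization, the cutoff $\psi$, and the tangential fields $\sn_{ji}$ essential.
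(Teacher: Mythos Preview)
Your proposal is essentially the paper's own approach: reduce to the Jang--Masmoudi framework~\cite{JaMa2015} for the pure Euler part and control the new nonlocal force term $A^k_i\Psi,_k$ via the field estimates of Lemmas~\ref{L:TANGENTIAL}--\ref{L:KEY}, closing by a continuity/Gronwall argument. Two small corrections are in order. First, in the local-in-time setting of Appendix~\ref{S:LWP} there is no matrix $\Lambda$ (that arose only from the affine rescaling in the main theorem); the Poisson equation is $\text{div}_\zeta(\nabla_\zeta\Psi)=4\pi w^\alpha J^{-1}$, so the Hodge-type identities you invoke should be written with $\text{div}_A$ and $\text{Curl}_A$, not $\text{div}_{\Lambda A}$. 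Second, the Poisson force $A^k_i\Psi,_k=[\nabla_\zeta\Psi]_i$ is a pure Lie gradient and is therefore \emph{exactly} annihilated by $\text{Curl}_\zeta$, so contrary to what you write it does cancel cleanly in the vorticity equation and contributes no error there. Finally, one mild strategic difference: the paper handles the approximation by lagging the potential one step in the iteration of~\cite{JaMa2015} (at step $n{+}1$ the field is computed from the step-$n$ flow map and treated as known forcing), whereas you propose solving the elliptic constraint self-consistently within a regularized problem; both work, but the paper's choice neatly sidesteps the compatibility concern you flag between the regularization and the elliptic constraint.
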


\noindent
{\em Sketch of the proof of Theorem~\ref{T:LOCAL}.}
The proof of Theorem~\ref{T:LOCAL} follows by incorporating two ingredients: the well-posedness proof for the compressible Euler system of Jang \& Masmoudi~\cite{JaMa2015} and the estimates on the gravitational potential in Section~\ref{SS:FORCEFIELD}. The basic idea is to show an energy estimate of the form
\[
\locnorm(t) + \locvortnorm(t) \le C + t p(\locnorm(t)) +tp( \locvortnorm(t)), 
\]
where $p(s) = P(\sqrt s)$ and $P$ is a polynomial of degree at least 1.
The novelty with respect to~\cite{JaMa2015} are the estimates of the potential term that in the analysis present themselves as error terms of the form
\begin{align}
& \sum_{a+|\beta|\le N} \int_0^t\int_B  \psi w^{a}\der\left(w^a\left(\A^k_i \Psi,_k\right)\right) \der v^i\,dy\,d\tau \notag \\
& \ \ \ \ +  \sum_{|\nu|\le N} \int_0^t \int_B  (1-\psi)\pa^\nu\left(w^a\left(\A^k_i \Psi,_k\right)\right) \pa^\nu v^i\,dy\,d\tau. \label{E:FIELDBOUNDLOC}
\end{align}
By a similar argument as in the proof of Proposition~\ref{P:FIELD}, this time estimating the integrands inside the $\int_0^t\dots \,d\tau$ integrals in $L^\infty([0,t])$-norm, 
we show that the expression~\eqref{E:FIELDBOUNDLOC} is bounded by  a constant multiple of 
\[
t \locnorm(t) + C t,
\]
where the constant $C$ depends only on the initial conditions. A classical continuity argument yields the desired a priori bounds. With the a priori bounds, the same approximate scheme at $(n+1)^{th}$ step used in \cite{JaMa2015} with the potential term labeled by $n$ (lower order term) together with the duality argument leads to the local well-posedness.

\end{document}